\documentclass{article}

\usepackage{amssymb,amsmath,amsthm}
\usepackage[]{algorithm2e}

\usepackage{pgf,tikz}
\usepackage{multicol}


\newtheorem{definition}{Definition}[section]
\newtheorem{theorem}[definition]{Theorem}

\newtheorem{remark}[definition]{Remark}
\newtheorem{lemma}[definition]{Lemma}
\newtheorem{proposition}[definition]{Proposition}


\usepackage{ifxetex}
\ifxetex
  \usepackage{fontspec}
\else
  \usepackage[T1]{fontenc}
  \usepackage[utf8]{inputenc}
  \usepackage{lmodern}
\fi

\title{Domination in the Sierpi\'{n}ski graphs $S(K_n,t)$}
\date{August 22, 2020}

\author{Chia-An Liu
\thanks{Department of Mathematics, Soochow University, Taipei 111002, Taiwan R.O.C.
{\tt Email: liuchiaan8@gmail.com}}}



\begin{document}
\maketitle

\begin{abstract}
Different types of domination on the Sierpi\'{n}ski graphs $S(K_n,t)$ will be studied in this paper. More precisely, we propose a minimal dominating set for $S(K_n,t)$ so that the exact values of their domination numbers, Roman domination numbers, and double Roman domination numbers are given. As applications, some previous bounds and results are confirmed to be tight and further generalized.
\end{abstract}

\bigskip

\noindent \textbf{Keywords:} Domination number, Roman domination number, double Roman domination number, Sierpi\'{n}ski graphs.

\smallskip

\noindent \textbf{MSC 2020:} 05C69; 05C76.

\medskip

\section{Introduction and preliminary}      \label{sec_intro}

Let $[n]=\{1,2,\ldots,n\}$ be the set of positive integers at most $n.$ For every pair of positive integers $n$ and $t,$ the Sierpi\'{n}ski graph $S(K_n,t)$ is defined as the simple graph with vertices set $[n]^t=\{v_1v_2\ldots v_t\mid v_i\in[n]~\text{for}~1\leq i\leq t\}$, in which $u_1u_2\ldots u_t$ and $v_1v_2\ldots v_t$ are adjacent if and only if there exists $s\in[t]$ satisfying
\begin{equation}
\left\{\begin{array}{ll}
u_j=v_j & \text{if}~j<s;
\\
u_s\neq v_s; &
\\
u_j=v_s~\text{and}~v_j=u_s & \text{if}~j>s.
\end{array}\right.
\nonumber
\end{equation}
In short, the consecutively repeated entries in a vertex are often written together. For example, the vertex $u_1u_2\underbrace{u_3\ldots u_3}_{t-2}$ can be denoted by $u_1u_2u_3^{t-2}.$ See Figure 1 as an example of $S(K_n,t)$ when $n=4$ and $t=3.$

\medskip

\begin{center}
\begin{tikzpicture}
\draw[] (0,7)coordinate(v111) circle(0.1cm);
\draw[] (-0.3,7.2)coordinate(111) node{\footnotesize 111};
\draw[] (0,6)coordinate(v112) circle(0.1cm);
\draw[] (-0.4,6)coordinate(112) node{\footnotesize 112};
\draw[] (0,5)coordinate(v121) circle(0.1cm);
\draw[] (-0.4,5)coordinate(121) node{\footnotesize 121};
\draw[] (0,4)coordinate(v122) circle(0.1cm);
\draw[] (-0.4,4)coordinate(122) node{\footnotesize 122};
\draw[] (0,3)coordinate(v211) circle(0.1cm);
\draw[] (-0.4,3)coordinate(211) node{\footnotesize 211};
\draw[] (0,2)coordinate(v212) circle(0.1cm);
\draw[] (-0.4,2)coordinate(212) node{\footnotesize 212};
\draw[] (0,1)coordinate(v221) circle(0.1cm);
\draw[] (-0.4,1)coordinate(221) node{\footnotesize 221};
\draw[] (0,0)coordinate(v222) circle(0.1cm);
\draw[] (-0.3,-0.2)coordinate(222) node{\footnotesize 222};
\draw[] (1,7)coordinate(v114) circle(0.1cm);
\draw[] (1,7.25)coordinate(114) node{\footnotesize 114};
\draw[] (1,6)coordinate(v113) circle(0.1cm);
\draw[] (0.9,5.7)coordinate(113) node{\footnotesize 113};
\draw[] (1,5)coordinate(v124) circle(0.1cm);
\draw[] (0.9,5.3)coordinate(124) node{\footnotesize 124};
\draw[] (1,4)coordinate(v123) circle(0.1cm);
\draw[] (1,3.7)coordinate(123) node{\footnotesize 123};
\draw[] (1,3)coordinate(v214) circle(0.1cm);
\draw[] (1,3.3)coordinate(214) node{\footnotesize 214};
\draw[] (1,2)coordinate(v213) circle(0.1cm);
\draw[] (0.9,1.7)coordinate(213) node{\footnotesize 213};
\draw[] (1,1)coordinate(v224) circle(0.1cm);
\draw[] (0.9,1.3)coordinate(224) node{\footnotesize 224};
\draw[] (1,0)coordinate(v223) circle(0.1cm);
\draw[] (1,-0.25)coordinate(223) node{\footnotesize 223};
\draw[] (2,7)coordinate(v141) circle(0.1cm);
\draw[] (2,7.25)coordinate(141) node{\footnotesize 141};
\draw[] (2,6)coordinate(v142) circle(0.1cm);
\draw[] (2.1,5.7)coordinate(142) node{\footnotesize 142};
\draw[] (2,5)coordinate(v131) circle(0.1cm);
\draw[] (2.1,5.3)coordinate(131) node{\footnotesize 131};
\draw[] (2,4)coordinate(v132) circle(0.1cm);
\draw[] (2,3.7)coordinate(132) node{\footnotesize 132};
\draw[] (2,3)coordinate(v241) circle(0.1cm);
\draw[] (2,3.3)coordinate(241) node{\footnotesize 241};
\draw[] (2,2)coordinate(v242) circle(0.1cm);
\draw[] (2.1,1.7)coordinate(242) node{\footnotesize 242};
\draw[] (2,1)coordinate(v231) circle(0.1cm);
\draw[] (2.1,1.3)coordinate(231) node{\footnotesize 231};
\draw[] (2,0)coordinate(v232) circle(0.1cm);
\draw[] (2,-0.25)coordinate(232) node{\footnotesize 232};
\draw[] (3,7)coordinate(v144) circle(0.1cm);
\draw[] (3,7.25)coordinate(144) node{\footnotesize 144};
\draw[] (3,6)coordinate(v143) circle(0.1cm);
\draw[] (3.3,6.2)coordinate(143) node{\footnotesize 143};
\draw[] (3,5)coordinate(v134) circle(0.1cm);
\draw[] (3.3,5.2)coordinate(134) node{\footnotesize 134};
\draw[] (3,4)coordinate(v133) circle(0.1cm);
\draw[] (2.9,3.7)coordinate(133) node{\footnotesize 133};
\draw[] (3,3)coordinate(v244) circle(0.1cm);
\draw[] (2.9,3.3)coordinate(244) node{\footnotesize 244};
\draw[] (3,2)coordinate(v243) circle(0.1cm);
\draw[] (3.3,2.2)coordinate(243) node{\footnotesize 243};
\draw[] (3,1)coordinate(v234) circle(0.1cm);
\draw[] (3.3,1.2)coordinate(234) node{\footnotesize 234};
\draw[] (3,0)coordinate(v233) circle(0.1cm);
\draw[] (3,-0.25)coordinate(233) node{\footnotesize 233};
\draw[] (4,7)coordinate(v411) circle(0.1cm);
\draw[] (4,7.25)coordinate(411) node{\footnotesize 411};
\draw[] (4,6)coordinate(v412) circle(0.1cm);
\draw[] (3.7,5.8)coordinate(412) node{\footnotesize 412};
\draw[] (4,5)coordinate(v421) circle(0.1cm);
\draw[] (3.7,4.8)coordinate(421) node{\footnotesize 421};
\draw[] (4,4)coordinate(v422) circle(0.1cm);
\draw[] (4.1,3.7)coordinate(422) node{\footnotesize 422};
\draw[] (4,3)coordinate(v311) circle(0.1cm);
\draw[] (4.1,3.3)coordinate(311) node{\footnotesize 311};
\draw[] (4,2)coordinate(v312) circle(0.1cm);
\draw[] (3.7,1.8)coordinate(312) node{\footnotesize 312};
\draw[] (4,1)coordinate(v321) circle(0.1cm);
\draw[] (3.7,0.8)coordinate(321) node{\footnotesize 321};
\draw[] (4,0)coordinate(v322) circle(0.1cm);
\draw[] (4,-0.25)coordinate(322) node{\footnotesize 322};
\draw[] (5,7)coordinate(v414) circle(0.1cm);
\draw[] (5,7.25)coordinate(414) node{\footnotesize 414};
\draw[] (5,6)coordinate(v413) circle(0.1cm);
\draw[] (4.9,5.7)coordinate(413) node{\footnotesize 413};
\draw[] (5,5)coordinate(v424) circle(0.1cm);
\draw[] (4.9,5.3)coordinate(424) node{\footnotesize 424};
\draw[] (5,4)coordinate(v423) circle(0.1cm);
\draw[] (5,3.7)coordinate(423) node{\footnotesize 423};
\draw[] (5,3)coordinate(v314) circle(0.1cm);
\draw[] (5,3.3)coordinate(314) node{\footnotesize 314};
\draw[] (5,2)coordinate(v313) circle(0.1cm);
\draw[] (4.9,1.7)coordinate(313) node{\footnotesize 313};
\draw[] (5,1)coordinate(v324) circle(0.1cm);
\draw[] (4.9,1.3)coordinate(324) node{\footnotesize 324};
\draw[] (5,0)coordinate(v323) circle(0.1cm);
\draw[] (5,-0.25)coordinate(323) node{\footnotesize 323};
\draw[] (6,7)coordinate(v441) circle(0.1cm);
\draw[] (6,7.25)coordinate(441) node{\footnotesize 441};
\draw[] (6,6)coordinate(v442) circle(0.1cm);
\draw[] (6.1,5.7)coordinate(442) node{\footnotesize 442};
\draw[] (6,5)coordinate(v431) circle(0.1cm);
\draw[] (6.1,5.3)coordinate(431) node{\footnotesize 431};
\draw[] (6,4)coordinate(v432) circle(0.1cm);
\draw[] (6,3.7)coordinate(432) node{\footnotesize 432};
\draw[] (6,3)coordinate(v341) circle(0.1cm);
\draw[] (6,3.3)coordinate(341) node{\footnotesize 341};
\draw[] (6,2)coordinate(v342) circle(0.1cm);
\draw[] (6.1,1.7)coordinate(342) node{\footnotesize 342};
\draw[] (6,1)coordinate(v331) circle(0.1cm);
\draw[] (6.1,1.3)coordinate(331) node{\footnotesize 331};
\draw[] (6,0)coordinate(v332) circle(0.1cm);
\draw[] (6,-0.25)coordinate(332) node{\footnotesize 332};
\draw[] (7,7)coordinate(v444) circle(0.1cm);
\draw[] (7.3,7.2)coordinate(444) node{\footnotesize 444};
\draw[] (7,6)coordinate(v443) circle(0.1cm);
\draw[] (7.4,6)coordinate(443) node{\footnotesize 443};
\draw[] (7,5)coordinate(v434) circle(0.1cm);
\draw[] (7.4,5)coordinate(434) node{\footnotesize 434};
\draw[] (7,4)coordinate(v433) circle(0.1cm);
\draw[] (7.4,4)coordinate(433) node{\footnotesize 433};
\draw[] (7,3)coordinate(v344) circle(0.1cm);
\draw[] (7.4,3)coordinate(344) node{\footnotesize 344};
\draw[] (7,2)coordinate(v343) circle(0.1cm);
\draw[] (7.4,2)coordinate(343) node{\footnotesize 343};
\draw[] (7,1)coordinate(v334) circle(0.1cm);
\draw[] (7.4,1)coordinate(334) node{\footnotesize 334};
\draw[] (7,0)coordinate(v333) circle(0.1cm);
\draw[] (7.3,-0.2)coordinate(333) node{\footnotesize 333};
\draw(v111)--(v222)--(v333)--(v444)--(v111)--(v333);
\draw(v222)--(v444);
\draw(v122)--(v144);
\draw(v322)--(v344);
\draw(v411)--(v433);
\draw(v211)--(v233);
\draw(v122)--(v133);
\draw(v422)--(v433);
\draw(v211)--(v244);
\draw(v311)--(v344);
\draw(v144)--(v133);
\draw(v411)--(v422);
\draw(v244)--(v233);
\draw(v311)--(v322);
\draw(v112)--(v113)--(v114)--(v112);
\draw(v141)--(v142)--(v143)--(v141);
\draw(v121)--(v123)--(v124)--(v121);
\draw(v131)--(v132)--(v134)--(v131);
\draw(v212)--(v213)--(v214)--(v212);
\draw(v241)--(v242)--(v243)--(v241);
\draw(v221)--(v223)--(v224)--(v221);
\draw(v231)--(v232)--(v234)--(v231);
\draw(v312)--(v313)--(v314)--(v312);
\draw(v341)--(v342)--(v343)--(v341);
\draw(v321)--(v323)--(v324)--(v321);
\draw(v331)--(v332)--(v334)--(v331);
\draw(v412)--(v413)--(v414)--(v412);
\draw(v441)--(v442)--(v443)--(v441);
\draw(v421)--(v423)--(v424)--(v421);
\draw(v431)--(v432)--(v434)--(v431);

\draw[fill=white] (0,7)coordinate(v111) circle(0.1cm);
\draw[fill=white] (0,6)coordinate(v112) circle(0.1cm);
\draw[fill=white] (0,5)coordinate(v121) circle(0.1cm);
\draw[fill=white] (0,4)coordinate(v122) circle(0.1cm);
\draw[fill=white] (0,3)coordinate(v211) circle(0.1cm);
\draw[fill=white] (0,2)coordinate(v212) circle(0.1cm);
\draw[fill=white] (0,1)coordinate(v221) circle(0.1cm);
\draw[fill=white] (0,0)coordinate(v222) circle(0.1cm);
\draw[fill=white] (1,7)coordinate(v114) circle(0.1cm);
\draw[fill=white] (1,6)coordinate(v113) circle(0.1cm);
\draw[fill=white] (1,5)coordinate(v124) circle(0.1cm);
\draw[fill=white] (1,4)coordinate(v123) circle(0.1cm);
\draw[fill=white] (1,3)coordinate(v214) circle(0.1cm);
\draw[fill=white] (1,2)coordinate(v213) circle(0.1cm);
\draw[fill=white] (1,1)coordinate(v224) circle(0.1cm);
\draw[fill=white] (1,0)coordinate(v223) circle(0.1cm);
\draw[fill=white] (2,7)coordinate(v141) circle(0.1cm);
\draw[fill=white] (2,6)coordinate(v142) circle(0.1cm);
\draw[fill=white] (2,5)coordinate(v131) circle(0.1cm);
\draw[fill=white] (2,4)coordinate(v132) circle(0.1cm);
\draw[fill=white] (2,3)coordinate(v241) circle(0.1cm);
\draw[fill=white] (2,2)coordinate(v242) circle(0.1cm);
\draw[fill=white] (2,1)coordinate(v231) circle(0.1cm);
\draw[fill=white] (2,0)coordinate(v232) circle(0.1cm);
\draw[fill=white] (3,7)coordinate(v144) circle(0.1cm);
\draw[fill=white] (3,6)coordinate(v143) circle(0.1cm);
\draw[fill=white] (3,5)coordinate(v134) circle(0.1cm);
\draw[fill=white] (3,4)coordinate(v133) circle(0.1cm);
\draw[fill=white] (3,3)coordinate(v244) circle(0.1cm);
\draw[fill=white] (3,2)coordinate(v243) circle(0.1cm);
\draw[fill=white] (3,1)coordinate(v234) circle(0.1cm);
\draw[fill=white] (3,0)coordinate(v233) circle(0.1cm);
\draw[fill=white] (4,7)coordinate(v411) circle(0.1cm);
\draw[fill=white] (4,6)coordinate(v412) circle(0.1cm);
\draw[fill=white] (4,5)coordinate(v421) circle(0.1cm);
\draw[fill=white] (4,4)coordinate(v422) circle(0.1cm);
\draw[fill=white] (4,3)coordinate(v311) circle(0.1cm);
\draw[fill=white] (4,2)coordinate(v312) circle(0.1cm);
\draw[fill=white] (4,1)coordinate(v321) circle(0.1cm);
\draw[fill=white] (4,0)coordinate(v322) circle(0.1cm);
\draw[fill=white] (5,7)coordinate(v414) circle(0.1cm);
\draw[fill=white] (5,6)coordinate(v413) circle(0.1cm);
\draw[fill=white] (5,5)coordinate(v424) circle(0.1cm);
\draw[fill=white] (5,4)coordinate(v423) circle(0.1cm);
\draw[fill=white] (5,3)coordinate(v314) circle(0.1cm);
\draw[fill=white] (5,2)coordinate(v313) circle(0.1cm);
\draw[fill=white] (5,1)coordinate(v324) circle(0.1cm);
\draw[fill=white] (5,0)coordinate(v323) circle(0.1cm);
\draw[fill=white] (6,7)coordinate(v441) circle(0.1cm);
\draw[fill=white] (6,6)coordinate(v442) circle(0.1cm);
\draw[fill=white] (6,5)coordinate(v431) circle(0.1cm);
\draw[fill=white] (6,4)coordinate(v432) circle(0.1cm);
\draw[fill=white] (6,3)coordinate(v341) circle(0.1cm);
\draw[fill=white] (6,2)coordinate(v342) circle(0.1cm);
\draw[fill=white] (6,1)coordinate(v331) circle(0.1cm);
\draw[fill=white] (6,0)coordinate(v332) circle(0.1cm);
\draw[fill=white] (7,7)coordinate(v444) circle(0.1cm);
\draw[fill=white] (7,6)coordinate(v443) circle(0.1cm);
\draw[fill=white] (7,5)coordinate(v434) circle(0.1cm);
\draw[fill=white] (7,4)coordinate(v433) circle(0.1cm);
\draw[fill=white] (7,3)coordinate(v344) circle(0.1cm);
\draw[fill=white] (7,2)coordinate(v343) circle(0.1cm);
\draw[fill=white] (7,1)coordinate(v334) circle(0.1cm);
\draw[fill=white] (7,0)coordinate(v333) circle(0.1cm);
\end{tikzpicture}

\bigskip

{\bf Figure 1:} The Sierpi\'{n}ski graph $S(K_4,3).$
\end{center}

\medskip

Let $G=(V,E)$ be a graph with vertex set $V$ and edge set $E.$ For each $v\in V,$ $N_G(v)$ denotes the set of vertices adjacent to $v$ in $G$, and $N_G[v]=N_G(v)\cup \{v\}.$ A set $D\subseteq V$ is said to be {\it dominating} in $G$ if $\cup_{v\in D}N_G[v]=V.$ The {\it domination number} $\gamma(G)$ of $G$ is the minimum cardinality among all dominating sets of $G.$ It is well-known, for example, see~\cite{gj:79}, that testing whether $\gamma(G)\leq k$ or not for some input $k$ is an NP-complete problem. A {\it Roman dominating function} on $G$ is defined as a function $f:V\rightarrow\{0,1,2\}$ such that every vertex $u\in V$ with $f(u)=0$ has at least a neighbor $v\in N_G(u)$ satisfying $f(v)=2.$ The weight of $f$ is realized as $f(V)=\sum_{v\in V} f(v),$ and the {\it Roman domination number} of $G$, denoted by $\gamma_R(G),$ is the minimum weight among all Roman dominating functions of $G$. A {\it double Roman dominating function} on $G$ is defined as a function $f:V\rightarrow\{0,1,2,3\}$ such that
\begin{enumerate}
\item[(i)] every vertex $u\in V$ with $f(u)=0$ has at least one neighbor $v\in N_G(u)$ satisfying $f(v)=3$ or at least two distinct neighbors $w,x\in N_G(u)$ satisfying $f(w)=f(x)=2,$ and
\item[(ii)] every vertex $u\in V$ with $f(u)=1$ has at least a neighbor $v\in N_G(u)$ satisfying $f(v)\geq 3.$
\end{enumerate}
Similarly, the weight of $f$ is realized as $f(V)=\sum_{v\in V} f(v),$ and the {\it double Roman domination number} of $G$, denoted by $\gamma_{dR}(G),$ is the minimum weight among all double Roman dominating functions of $G$.

\medskip

Klav\v{z}ar and U. Milutinovi\'{c} introduced the graph $S(K_n,t)$ in~\cite{km:97}, and noticed that as $n=3$ those graphs are exactly the Tower of Hanoi graphs. Later in~\cite{kmp:02}, $S(K_n,t)$ have been called Sierpi\'{n}ski graphs and studied from many aspects. The concept of Sierpi\'{n}ski graphs was generalized so that $S(G,t)$ can be constructed for every simple graph $G$. One can refer to~\cite{gkmmp:13,hh:14,hp:12,kpz:13,rre:17,rrr:17} for more details. The Roman domination was defined in~\cite{rr:20,s:19} and has been studied by many authors. The authors in~\cite{ckpw:09,cdhh:04} proposed inspiring properties and problems involved with Roman domination in graphs. In 2016, Beeler {\it et al.} pioneered the study of double Roman domination in~\cite{bhh:16}. The decision of double Roman domination numbers was verified to be NP-complete for some families of graphs in~\cite{ahsy:16}. Some upper and lower bounds for $\gamma_{dR}(G)$ were given in~\cite{aa:18,aa:20,v:18} in terms of the number of vertices and various parameters in a graph.

\medskip

The following result given in~\cite{bhh:16} will be useful in this paper.
\begin{proposition}     \label{prop_noV1}
In a double Roman dominating function of weight $\gamma_{dR}(G),$ no vertex needs to be assigned the values $1.$
\end{proposition}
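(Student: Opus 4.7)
The plan is to prove the contrapositive in the standard way: show that any double Roman dominating function (DRDF) using the value $1$ can be strictly improved, so no minimum DRDF assigns the value $1$.

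More concretely, I would start by assuming, for contradiction, that there is a DRDF $f$ of weight $\gamma_{dR}(G)$ and a vertex $u\in V$ with $f(u)=1$. From condition (ii) of the definition of DRDF, $u$ must have some neighbor $v\in N_G(u)$ with $f(v)\geq 3$; since the codomain is $\{0,1,2,3\}$ this forces $f(v)=3$. This neighbor $v$ is the key witness that will allow the reassignment to go through.

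Next, I would define a new function $f':V\to\{0,1,2,3\}$ by $f'(u)=0$ and $f'(w)=f(w)$ for every $w\neq u$, and verify that $f'$ is still a DRDF. There are three categories of vertices to check. First, for $u$ itself we have $f'(u)=0$ and the neighbor $v$ still satisfies $f'(v)=3$, so condition (i) holds at $u$. Second, for any other vertex $w\neq u$ with $f'(w)=0$, condition (i) held for $w$ under $f$; since $f(u)=1$, the vertex $u$ never contributed a value of $3$ or a second value of $2$ to any such $w$, so lowering $f(u)$ to $0$ does not invalidate those requirements. Third, for any vertex $w\neq u$ with $f'(w)=1$, the neighbor of $w$ that provides value $\geq 3$ under $f$ is some vertex other than $u$ (since $f(u)=1<3$), hence condition (ii) still holds at $w$.

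Thus $f'$ is a DRDF with weight $f'(V)=f(V)-1=\gamma_{dR}(G)-1$, contradicting the minimality of $\gamma_{dR}(G)$. Therefore no vertex in a minimum DRDF need be assigned the value $1$. I do not anticipate any real obstacle here; the only point requiring care is the bookkeeping in the verification that $f'$ remains a DRDF, specifically noting that a vertex of value $1$ can never serve as a dominator for condition (i) and therefore its removal is harmless.
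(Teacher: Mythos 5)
Your argument is correct as stated, and there is nothing in the paper to compare it against: Proposition~\ref{prop_noV1} is imported from the reference \cite{bhh:16} and the paper gives no proof of it. Under the paper's stated definition of a double Roman dominating function---in which condition (ii) demands a neighbor $v$ with $f(v)\geq 3$---your reassignment $f'(u)=0$ is clearly legitimate: the witness $v$ with $f(v)=3$ immediately satisfies condition (i) at $u$, and a vertex carrying the value $1$ can never serve as a witness for condition (i) or (ii) at any other vertex, so nothing else breaks. You therefore obtain a strict decrease in weight and hence the stronger conclusion that no minimum-weight DRDF assigns the value $1$ at all.

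One caveat worth recording: in the original source \cite{bhh:16}, condition (ii) only requires a neighbor $v$ with $f(v)\geq 2$, and under that (weaker, standard) definition your first step fails---the guaranteed neighbor may carry only a $2$, in which case setting $f(u)=0$ can violate condition (i) at $u$. The classical argument then instead transfers the unit of weight to that neighbor (replace $f(u)=1$, $f(v)=2$ by $f(u)=0$, $f(v)=3$), producing an \emph{equal-weight} DRDF with one fewer vertex of value $1$; this is exactly why the proposition is phrased as ``no vertex \emph{needs} to be assigned the value $1$'' rather than asserting that no minimum DRDF uses the value $1$. Your proof is valid for the definition this paper actually gives, but if the intended definition is the one from \cite{bhh:16}, you would need this transfer argument in the case that $u$ has no neighbor of value $3$.
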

\noindent
In~\cite{rrr:17}, Ramezani {\it et al.} made a progress in Roman domination numbers of Sierpi\'{n}ski graphs.
\begin{theorem}     \label{thm_R_leq}
For any integers $n\geq 2$ and $t\geq 1,$
\begin{equation}
\gamma_R(S(K_n,t))\leq
\left\{\begin{array}{ll}
\frac{2n^t+2}{n+1} & \text{if $t$ is odd};  \\
\frac{2n^t+n-1}{n+1} & \text{if $t$ is even}.
\end{array}\right.
\nonumber
\end{equation}
\end{theorem}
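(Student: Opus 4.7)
The plan is to prove the bound by induction on $t$, constructing an explicit Roman dominating function of the claimed weight at each step. The base case $t=1$ is immediate since $S(K_n,1)=K_n$ admits an RDF of weight $2=\tfrac{2n+2}{n+1}$ by labelling any single vertex with $2$.

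For the inductive step, I would exploit the recursive structure: $S(K_n,t)$ is the disjoint union of $n$ isomorphic copies $S^1,\ldots,S^n$ of $S(K_n,t-1)$ (with $S^i$ the subgraph induced by vertices starting with $i$), joined only by single ``bridge'' edges $ij^{t-1}\sim ji^{t-1}$ for $i\neq j$, one for each ordered pair. I would carry two parallel inductive claims by parity of $t$: (A) for odd $t$, there is a dominating set $D_t$ of $S(K_n,t)$ of size $\alpha_t:=\tfrac{n^t+1}{n+1}$, giving an RDF of weight $2\alpha_t$ by labelling $D_t$ with $2$; (B) for even $t$, there is a set $D_t$ of size $\beta_t:=\tfrac{n^t-1}{n+1}$ together with an exceptional vertex $v_t^*=n^t\notin D_t$ such that $D_t$ dominates $V\setminus\{v_t^*\}$, giving an RDF of weight $2\beta_t+1$ by labelling $D_t$ with $2$ and $v_t^*$ with $1$ (using that vertices labelled $1$ are self-satisfying).

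To obtain (A) at step $t$ from (B) at step $t-1$, I would place the even structure inside each copy $S^i$ with $i\neq n$, aligned via the natural isomorphism so that the local exceptional vertex becomes the extreme vertex $in^{t-1}$; since $in^{t-1}$ is bridge-adjacent to $ni^{t-1}\in S^n$, this vertex will be externally dominated as soon as $ni^{t-1}\in D_t$. Inside $S^n$ I would take a dominating set of size $\beta_{t-1}+1$ containing the $n-1$ prescribed vertices $\{ni^{t-1}:i\neq n\}$, so that $|D_t|=(n-1)\beta_{t-1}+(\beta_{t-1}+1)=n\beta_{t-1}+1=\alpha_t$. Conversely, to obtain (B) at step $t$ from (A) at step $t-1$, place $D_{t-1}$ inside each $S^i$ (so that the global corner $n^t$ lies in the resulting set), and then demote a strategically chosen vertex from label $2$ to label $1$, saving one unit of weight to give total $2n\alpha_{t-1}-1=2\beta_t+1$; the demoted vertex becomes self-covered, and its lost dominatees must be recovered through the bridge edges or by other members of $D_t$.

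The main obstacle lies in the (B)$\Rightarrow$(A) transition: claim (B) only fixes the exceptional vertex of $S(K_n,t-1)$ at $n^{t-1}$, so producing a dominating set of $S^n$ of size $\beta_{t-1}+1$ that \emph{also} contains the $n-1$ prescribed extreme vertices is not automatic, and in general the natural set $D_{t-1}\cup\{v_{t-1}^*\}$ will not contain them. I would handle this either by strengthening the induction hypothesis to keep track of which extreme vertices lie in $D_t$ for each parity, or by exploiting the obvious automorphism group of $S(K_n,t)$ obtained from permuting the symbol set $[n]$, which lets any corner play the role of $n^t$. An analogous but easier care is needed in the (A)$\Rightarrow$(B) step to ensure that the demotion preserves the RDF condition, since the odd dominating set of size $\alpha_{t-1}$ is tight. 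Once the structural claims are appropriately formulated, the recurrences $\alpha_t=n\beta_{t-1}+1$ and $\beta_t=n\alpha_{t-1}-1$ unwind by routine algebra to the closed forms $\tfrac{2n^t+2}{n+1}$ (odd $t$) and $\tfrac{2n^t+n-1}{n+1}$ (even $t$), completing the induction.
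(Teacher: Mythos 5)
Your two target structures are exactly the right ones, and they are what the paper ultimately produces: for odd $t$ an efficient dominating set of size $\alpha_t=\frac{n^t+1}{n+1}$ assigned the value $2$, and for even $t$ a set of size $\beta_t=\frac{n^t-1}{n+1}$ that dominates everything except one exceptional vertex, which is assigned the value $1$ (in the paper this is $D_{n,t}^*$ together with $1^t$). The weights and the recurrences $\alpha_t=n\beta_{t-1}+1$, $\beta_t=n\alpha_{t-1}-1$ are also correct. The problem is that both of your one-level transitions are broken. The (A)$\Rightarrow$(B) demotion step actually fails for even $t\geq 4$: a dominating set of size $\alpha_{t-1}$ in a copy $S^i\cong S(K_n,t-1)$ is forced by the counting to be a perfect code, so every one of its members privately dominates all of its neighbours inside that copy. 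A private neighbour can pick up a second dominator only through a bridge edge, hence only if it is one of the $n-1$ bridge vertices $ij^{t-1}$ of the copy; since for $t-1\geq 2$ no vertex of $S(K_n,t-1)$ has two extreme neighbours, no member of $\bigcup_i D^i$ has all of its private neighbours rescuable, and no vertex can be demoted from $2$ to $1$ without leaving some $0$-vertex unguarded. (The demotion does work for $t=2$, where each copy is a single $K_n$, which is why the small case is misleading; one can check concretely that for $n=2$, $t=4$ no demotion of $D^1\cup D^2$ in $P_{16}$ is legal.)

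The (B)$\Rightarrow$(A) step has the gap you flag yourself, and neither proposed repair closes it as stated: the automorphisms of $S(K_n,t-1)$ induced by permuting $[n]$ preserve the number of extreme vertices contained in a dominating set, so they can only relabel which corners appear, not force $n-1$ prescribed corners into a minimum dominating set unless you separately prove that some minimum dominating set of the even-level graph contains at least $n-1$ corners --- a nontrivial strengthening of (B) that would then have to be threaded through the entire induction. The paper avoids both difficulties by recursing two levels at a time: $D_{n,t}$ is built from $D_{n,t-2}$ by the explicit families $E_1,E_2,E_3$, pairwise disjointness of closed neighbourhoods is verified directly (Lemma~\ref{lem_noCommonN}), and for even $t$ the exceptional vertex $1^t$ automatically lands at distance at least $2$ from the rest of $D_{n,t}$, so the value $1$ can be placed on it with no demotion argument at all. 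As written, your proposal is a plan with one unresolved step and one step that is false as stated, rather than a proof.
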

\noindent
When $t=2,$ Theorem~\ref{thm_R_leq} was verified to be tight in~\cite{aa:20}. Moreover, the authors in~\cite{aa:20} also gave exact values of $\gamma_{dR}(S(K_n,2))$. In this paper, we will show that the bounds in Theorem~\ref{thm_R_leq} are tight and determine the precise values of $\gamma_{dR}(S(K_n,t))$ for each positive integer $t$.

\medskip

This paper is organized as follows. Basic definitions and previous results are mentioned in Section~\ref{sec_intro}. In Section~\ref{sec_Dset}, a dominating set $D_{n,t}$ of the Sierpi\'{n}ski graph $S(K_n,t)$ are constructed, whose cardinality is $|D_{n,t}|=\gamma(S(K_n,t)).$ With the aid of $D_{n,t}$, in Section~\ref{sec_domNum} the domination number $\gamma(S(K_n,t)),$ Roman domination number $\gamma_R(S(K_n,t)),$ and double Roman domination $\gamma_{dR}(S(K_n,t))$ of $S(K_n,t)$ will be attained. The results are reviewed as a concluding remark in Section~\ref{sec_conclu}.

\bigskip

\section{Dominating sets $D_{n,t}$ of $S(K_n,t)$}       \label{sec_Dset}

We propose a subset $D_{n,t}$ of vertices in $S(K_n,t)$ for every pair of positive integers $n\geq 2$ and $t$ in this section. It will be shown that $D_{n,t}$ is a dominating set for $S(K_n,t).$
\begin{definition}  \label{defn_dnt}
For positive integers $n$ and $t$, let $D_{n,t}$ be a subset of the vertex set $V_{n,t}$ of $S(K_n,t)$ such that $D_{n,1}=\{1\}$ and $D_{n,2}=\{11,21,\ldots,n1\}.$ When $t\geq 3,$ for each $\textbf{v}=v_1v_2\ldots v_{t-2}\in D_{n,t-2},$ let
\begin{eqnarray}
E_1(\textbf{v}) &=& \big\{v_1v_2\ldots v_{t-3} v_{t-2} \alpha\alpha\mid \alpha\in[n]\big\},
\nonumber \\
E_2(\textbf{v}) &=& \big\{v_1v_2\ldots v_{t-3}\alpha\beta v_{t-2}\mid \alpha,\beta\in[n]\setminus\{v_{t-2}\}\big\},
\nonumber
\end{eqnarray}
and, if the entries of $\textbf{v}$ are not constant,
let $\ell=\ell(\textbf{v})$ denote the largest number in $[t-3]$ satisfying $v_\ell\neq v_{\ell+1}$ and
\begin{equation}
E_3(\textbf{v})
=\big\{v_1v_2\ldots v_{\ell-1}v_{\ell+1}v_\ell^{t-\ell-2}\alpha v_\ell\mid \alpha\in[n]\setminus\{v_\ell\}\big\}.
\nonumber
\end{equation}

Then, we define $D_{n,t}$ as follows.

\begin{enumerate}
\item[(i)] If $t\geq 3$ is odd, then
$$D_{n,t}=E_1(1^{t-2})\cup E_2(1^{t-2})\cup\bigcup\limits_{\textbf{v}\in D_{n,t-2}\setminus\{1^{t-2}\}}E_1(\textbf{v})\cup E_2(\textbf{v})\cup E_3(\textbf{v}).$$

\item[(ii)] If $t\geq 4$ is even, then
$$D_{n,t}=\{1^{t-2}\alpha 1\mid \alpha\in [n]\}\cup\bigcup\limits_{\textbf{v}\in D_{n,t-2}\setminus\{1^{t-2}\}}E_1(\textbf{v})\cup E_2(\textbf{v})\cup E_3(\textbf{v}).$$

\end{enumerate}

\end{definition}

\medskip

The sets $D_{n,t}$ are constructed inductively by odd and even $t$, respectively. For example, one can see the subset $D_{3,4}$ of vertices in $S(K_3,4)$ in Figure 2.

\medskip

\begin{center}
\begin{tikzpicture}
\draw[] (0,10.5)coordinate(v1111) circle(0.1cm);
\draw[] (0,10.75)coordinate(1111) node{\footnotesize 1111};
\draw[] (-6,0)coordinate(v2222) circle(0.1cm);
\draw[] (-6.2,-0.25)coordinate(2222) node{\footnotesize 2222};
\draw[] (6,0)coordinate(v3333) circle(0.1cm);
\draw[] (6.2,-0.25)coordinate(3333) node{\footnotesize 3333};
\draw[] (-5.2,0)coordinate(v2223) circle(0.1cm);
\draw[] (-5.2,-0.25)coordinate(2223) node{\footnotesize 2223};
\draw[] (-4.4,0)coordinate(v2232) circle(0.1cm);
\draw[] (-4.4,-0.25)coordinate(2232) node{\footnotesize 2232};
\draw[] (-3.6,0)coordinate(v2233) circle(0.1cm);
\draw[] (-3.6,-0.25)coordinate(2233) node{\footnotesize 2233};
\draw[] (-2.8,0)coordinate(v2322) circle(0.1cm);
\draw[] (-2.8,-0.25)coordinate(2322) node{\footnotesize 2322};
\draw[] (-2,0)coordinate(v2323) circle(0.1cm);
\draw[] (-2,-0.25)coordinate(2323) node{\footnotesize 2323};
\draw[] (-1.2,0)coordinate(v2332) circle(0.1cm);
\draw[] (-1.2,-0.25)coordinate(2332) node{\footnotesize 2332};
\draw[] (-0.4,0)coordinate(v2333) circle(0.1cm);
\draw[] (-0.4,-0.25)coordinate(2333) node{\footnotesize 2333};
\draw[] (5.2,0)coordinate(v3332) circle(0.1cm);
\draw[] (5.2,-0.25)coordinate(3332) node{\footnotesize 3332};
\draw[] (4.4,0)coordinate(v3323) circle(0.1cm);
\draw[] (4.4,-0.25)coordinate(3323) node{\footnotesize 3323};
\draw[] (3.6,0)coordinate(v3322) circle(0.1cm);
\draw[] (3.6,-0.25)coordinate(3322) node{\footnotesize 3322};
\draw[] (2.8,0)coordinate(v3233) circle(0.1cm);
\draw[] (2.8,-0.25)coordinate(3233) node{\footnotesize 3233};
\draw[] (2,0)coordinate(v3232) circle(0.1cm);
\draw[] (2,-0.25)coordinate(3232) node{\footnotesize 3232};
\draw[] (1.2,0)coordinate(v3223) circle(0.1cm);
\draw[] (1.2,-0.25)coordinate(3223) node{\footnotesize 3223};
\draw[] (0.4,0)coordinate(v3222) circle(0.1cm);
\draw[] (0.4,-0.25)coordinate(3222) node{\footnotesize 3222};
\draw[] (-0.4,9.8)coordinate(v1112) circle(0.1cm);
\draw[] (-0.9,9.8)coordinate(1112) node{\footnotesize 1112};
\draw[] (-0.8,9.1)coordinate(v1121) circle(0.1cm);
\draw[] (-1.3,9.1)coordinate(1121) node{\footnotesize 1121};
\draw[] (-1.2,8.4)coordinate(v1122) circle(0.1cm);
\draw[] (-1.7,8.4)coordinate(1122) node{\footnotesize 1122};
\draw[] (-1.6,7.7)coordinate(v1211) circle(0.1cm);
\draw[] (-2.1,7.7)coordinate(1211) node{\footnotesize 1211};
\draw[] (-2,7)coordinate(v1212) circle(0.1cm);
\draw[] (-2.5,7)coordinate(1212) node{\footnotesize 1212};
\draw[] (-2.4,6.3)coordinate(v1221) circle(0.1cm);
\draw[] (-2.9,6.3)coordinate(1221) node{\footnotesize 1221};
\draw[] (-2.8,5.6)coordinate(v1222) circle(0.1cm);
\draw[] (-3.3,5.6)coordinate(1222) node{\footnotesize 1222};
\draw[] (-3.2,4.9)coordinate(v2111) circle(0.1cm);
\draw[] (-3.7,4.9)coordinate(2111) node{\footnotesize 2111};
\draw[] (-3.6,4.2)coordinate(v2112) circle(0.1cm);
\draw[] (-4.1,4.2)coordinate(2112) node{\footnotesize 2112};
\draw[] (-4,3.5)coordinate(v2121) circle(0.1cm);
\draw[] (-4.5,3.5)coordinate(2121) node{\footnotesize 2121};
\draw[] (-4.4,2.8)coordinate(v2122) circle(0.1cm);
\draw[] (-4.9,2.8)coordinate(2122) node{\footnotesize 2122};
\draw[] (-4.8,2.1)coordinate(v2211) circle(0.1cm);
\draw[] (-5.3,2.1)coordinate(2211) node{\footnotesize 2211};
\draw[] (-5.2,1.4)coordinate(v2212) circle(0.1cm);
\draw[] (-5.7,1.4)coordinate(2212) node{\footnotesize 2212};
\draw[] (-5.6,0.7)coordinate(v2221) circle(0.1cm);
\draw[] (-6.1,0.7)coordinate(2221) node{\footnotesize 2221};
\draw[] (0.4,9.8)coordinate(v1113) circle(0.1cm);
\draw[] (0.9,9.8)coordinate(1113) node{\footnotesize 1113};
\draw[] (0.8,9.1)coordinate(v1131) circle(0.1cm);
\draw[] (1.3,9.1)coordinate(1131) node{\footnotesize 1131};
\draw[] (1.2,8.4)coordinate(v1133) circle(0.1cm);
\draw[] (1.7,8.4)coordinate(1133) node{\footnotesize 1133};
\draw[] (1.6,7.7)coordinate(v1311) circle(0.1cm);
\draw[] (2.1,7.7)coordinate(1311) node{\footnotesize 1311};
\draw[] (2,7)coordinate(v1313) circle(0.1cm);
\draw[] (2.5,7)coordinate(1313) node{\footnotesize 1313};
\draw[] (2.4,6.3)coordinate(v1331) circle(0.1cm);
\draw[] (2.9,6.3)coordinate(1331) node{\footnotesize 1331};
\draw[] (2.8,5.6)coordinate(v1333) circle(0.1cm);
\draw[] (3.3,5.6)coordinate(1333) node{\footnotesize 1333};
\draw[] (3.2,4.9)coordinate(v3111) circle(0.1cm);
\draw[] (3.7,4.9)coordinate(3111) node{\footnotesize 3111};
\draw[] (3.6,4.2)coordinate(v3113) circle(0.1cm);
\draw[] (4.1,4.2)coordinate(3113) node{\footnotesize 3113};
\draw[] (4,3.5)coordinate(v3131) circle(0.1cm);
\draw[] (4.5,3.5)coordinate(3131) node{\footnotesize 3131};
\draw[] (4.4,2.8)coordinate(v3133) circle(0.1cm);
\draw[] (4.9,2.8)coordinate(3133) node{\footnotesize 3133};
\draw[] (4.8,2.1)coordinate(v3311) circle(0.1cm);
\draw[] (5.3,2.1)coordinate(3311) node{\footnotesize 3311};
\draw[] (5.2,1.4)coordinate(v3313) circle(0.1cm);
\draw[] (5.7,1.4)coordinate(3313) node{\footnotesize 3313};
\draw[] (5.6,0.7)coordinate(v3331) circle(0.1cm);
\draw[] (6.1,0.7)coordinate(3331) node{\footnotesize 3331};
\draw[] (-0.4,8.4)coordinate(v1123) circle(0.1cm);
\draw[] (-0.4,8.1)coordinate(1123) node{\footnotesize 1123};
\draw[] (0.4,8.4)coordinate(v1132) circle(0.1cm);
\draw[] (0.4,8.1)coordinate(1132) node{\footnotesize 1132};
\draw[] (-1.2,7)coordinate(v1213) circle(0.1cm);
\draw[] (-0.7,7)coordinate(1213) node{\footnotesize 1213};
\draw[] (1.2,7)coordinate(v1312) circle(0.1cm);
\draw[] (0.7,7)coordinate(1312) node{\footnotesize 1312};
\draw[] (-0.8,6.3)coordinate(v1231) circle(0.1cm);
\draw[] (-0.4,6.5)coordinate(1231) node{\footnotesize 1231};
\draw[] (0.8,6.3)coordinate(v1321) circle(0.1cm);
\draw[] (0.4,6.5)coordinate(1321) node{\footnotesize 1321};
\draw[] (-2,5.6)coordinate(v1223) circle(0.1cm);
\draw[] (-2,5.3)coordinate(1223) node{\footnotesize 1223};
\draw[] (-1.2,5.6)coordinate(v1232) circle(0.1cm);
\draw[] (-1.2,5.3)coordinate(1232) node{\footnotesize 1232};
\draw[] (-0.4,5.6)coordinate(v1233) circle(0.1cm);
\draw[] (-0.4,5.3)coordinate(1233) node{\footnotesize 1233};
\draw[] (2,5.6)coordinate(v1332) circle(0.1cm);
\draw[] (2,5.3)coordinate(1332) node{\footnotesize 1332};
\draw[] (1.2,5.6)coordinate(v1323) circle(0.1cm);
\draw[] (1.2,5.3)coordinate(1323) node{\footnotesize 1323};
\draw[] (0.4,5.6)coordinate(v1322) circle(0.1cm);
\draw[] (0.4,5.3)coordinate(1322) node{\footnotesize 1322};
\draw[] (-2.8,4.2)coordinate(v2113) circle(0.1cm);
\draw[] (-2.3,4.2)coordinate(2113) node{\footnotesize 2113};
\draw[] (2.8,4.2)coordinate(v3112) circle(0.1cm);
\draw[] (2.3,4.2)coordinate(3112) node{\footnotesize 3112};
\draw[] (-2.4,3.5)coordinate(v2131) circle(0.1cm);
\draw[] (-1.9,3.5)coordinate(2131) node{\footnotesize 2131};
\draw[] (2.4,3.5)coordinate(v3121) circle(0.1cm);
\draw[] (1.9,3.5)coordinate(3121) node{\footnotesize 3121};
\draw[] (-3.6,2.8)coordinate(v2123) circle(0.1cm);
\draw[] (-3.6,2.5)coordinate(2123) node{\footnotesize 2123};
\draw[] (-2.8,2.8)coordinate(v2132) circle(0.1cm);
\draw[] (-2.8,2.5)coordinate(2132) node{\footnotesize 2132};
\draw[] (-2,2.8)coordinate(v2133) circle(0.1cm);
\draw[] (-1.5,2.8)coordinate(2133) node{\footnotesize 2133};
\draw[] (3.6,2.8)coordinate(v3132) circle(0.1cm);
\draw[] (3.6,2.5)coordinate(3132) node{\footnotesize 3132};
\draw[] (2.8,2.8)coordinate(v3123) circle(0.1cm);
\draw[] (2.8,2.5)coordinate(3123) node{\footnotesize 3123};
\draw[] (2,2.8)coordinate(v3122) circle(0.1cm);
\draw[] (1.5,2.8)coordinate(3122) node{\footnotesize 3122};
\draw[] (-1.6,2.1)coordinate(v2311) circle(0.1cm);
\draw[] (-1.1,2.1)coordinate(2311) node{\footnotesize 2311};
\draw[] (1.6,2.1)coordinate(v3211) circle(0.1cm);
\draw[] (1.1,2.1)coordinate(3211) node{\footnotesize 3211};
\draw[] (-4.4,1.4)coordinate(v2213) circle(0.1cm);
\draw[] (-3.9,1.4)coordinate(2213) node{\footnotesize 2213};
\draw[] (-2,1.4)coordinate(v2312) circle(0.1cm);
\draw[] (-2.5,1.4)coordinate(2312) node{\footnotesize 2312};
\draw[] (-1.2,1.4)coordinate(v2313) circle(0.1cm);
\draw[] (-0.7,1.4)coordinate(2313) node{\footnotesize 2313};
\draw[] (4.4,1.4)coordinate(v3312) circle(0.1cm);
\draw[] (3.9,1.4)coordinate(3312) node{\footnotesize 3312};
\draw[] (2,1.4)coordinate(v3213) circle(0.1cm);
\draw[] (2.5,1.4)coordinate(3213) node{\footnotesize 3213};
\draw[] (1.2,1.4)coordinate(v3212) circle(0.1cm);
\draw[] (0.7,1.4)coordinate(3212) node{\footnotesize 3212};
\draw[] (-4,0.7)coordinate(v2231) circle(0.1cm);
\draw[] (-3.6,0.9)coordinate(2231) node{\footnotesize 2231};
\draw[] (-2.4,0.7)coordinate(v2321) circle(0.1cm);
\draw[] (-2.8,0.9)coordinate(2321) node{\footnotesize 2321};
\draw[] (-0.8,0.7)coordinate(v2331) circle(0.1cm);
\draw[] (-0.4,0.9)coordinate(2331) node{\footnotesize 2331};
\draw[] (4,0.7)coordinate(v3321) circle(0.1cm);
\draw[] (3.6,0.9)coordinate(3321) node{\footnotesize 3321};
\draw[] (2.4,0.7)coordinate(v3231) circle(0.1cm);
\draw[] (2.8,0.9)coordinate(3231) node{\footnotesize 3231};
\draw[] (0.8,0.7)coordinate(v3221) circle(0.1cm);
\draw[] (0.4,0.9)coordinate(3221) node{\footnotesize 3221};
\draw(v1111)--(v2222)--(v3333)--(v1111);
\draw(v1112)--(v1113);
\draw(v1121)--(v1123);
\draw(v1131)--(v1132);
\draw(v1122)--(v1133);
\draw(v1211)--(v1233);
\draw(v1311)--(v1322);
\draw(v1222)--(v1333);
\draw(v1212)--(v1213);
\draw(v1312)--(v1313);
\draw(v1221)--(v1223);
\draw(v1231)--(v1232);
\draw(v1331)--(v1332);
\draw(v1321)--(v1323);
\draw(v2111)--(v2333);
\draw(v3111)--(v3222);
\draw(v2112)--(v2113);
\draw(v3113)--(v3112);
\draw(v2122)--(v2133);
\draw(v3122)--(v3133);
\draw(v2121)--(v2123);
\draw(v2131)--(v2132);
\draw(v3131)--(v3132);
\draw(v3121)--(v3123);
\draw(v2211)--(v2233);
\draw(v2311)--(v2322);
\draw(v3311)--(v3322);
\draw(v3211)--(v3233);
\draw(v2212)--(v2213);
\draw(v2312)--(v2313);
\draw(v3313)--(v3312);
\draw(v3213)--(v3212);
\draw(v2221)--(v2223);
\draw(v2231)--(v2232);
\draw(v2321)--(v2323);
\draw(v2331)--(v2332);
\draw(v3331)--(v3332);
\draw(v3321)--(v3323);
\draw(v3231)--(v3232);
\draw(v3221)--(v3223);

\draw[fill] (0,10.5)coordinate(v1111) circle(0.1cm);
\draw[fill=white] (-6,0)coordinate(v2222) circle(0.1cm);
\draw[fill=white] (6,0)coordinate(v3333) circle(0.1cm);
\draw[fill=white] (-5.2,0)coordinate(v2223) circle(0.1cm);
\draw[fill=white] (-4.4,0)coordinate(v2232) circle(0.1cm);
\draw[fill=white] (-3.6,0)coordinate(v2233) circle(0.1cm);
\draw[fill=white] (-2.8,0)coordinate(v2322) circle(0.1cm);
\draw[fill=white] (-2,0)coordinate(v2323) circle(0.1cm);
\draw[fill=white] (-1.2,0)coordinate(v2332) circle(0.1cm);
\draw[fill=white] (-0.4,0)coordinate(v2333) circle(0.1cm);
\draw[fill=white] (5.2,0)coordinate(v3332) circle(0.1cm);
\draw[fill=white] (4.4,0)coordinate(v3323) circle(0.1cm);
\draw[fill=white] (3.6,0)coordinate(v3322) circle(0.1cm);
\draw[fill=white] (2.8,0)coordinate(v3233) circle(0.1cm);
\draw[fill=white] (2,0)coordinate(v3232) circle(0.1cm);
\draw[fill=white] (1.2,0)coordinate(v3223) circle(0.1cm);
\draw[fill=white] (0.4,0)coordinate(v3222) circle(0.1cm);
\draw[fill=white] (-0.4,9.8)coordinate(v1112) circle(0.1cm);
\draw[fill] (-0.8,9.1)coordinate(v1121) circle(0.1cm);
\draw[fill=white] (-1.2,8.4)coordinate(v1122) circle(0.1cm);
\draw[fill=white] (-1.6,7.7)coordinate(v1211) circle(0.1cm);
\draw[fill] (-2,7)coordinate(v1212) circle(0.1cm);
\draw[fill=white] (-2.4,6.3)coordinate(v1221) circle(0.1cm);
\draw[fill=white] (-2.8,5.6)coordinate(v1222) circle(0.1cm);
\draw[fill] (-3.2,4.9)coordinate(v2111) circle(0.1cm);
\draw[fill=white] (-3.6,4.2)coordinate(v2112) circle(0.1cm);
\draw[fill=white] (-4,3.5)coordinate(v2121) circle(0.1cm);
\draw[fill] (-4.4,2.8)coordinate(v2122) circle(0.1cm);
\draw[fill=white] (-4.8,2.1)coordinate(v2211) circle(0.1cm);
\draw[fill=white] (-5.2,1.4)coordinate(v2212) circle(0.1cm);
\draw[fill] (-5.6,0.7)coordinate(v2221) circle(0.1cm);
\draw[fill=white] (0.4,9.8)coordinate(v1113) circle(0.1cm);
\draw[fill] (0.8,9.1)coordinate(v1131) circle(0.1cm);
\draw[fill=white] (1.2,8.4)coordinate(v1133) circle(0.1cm);
\draw[fill=white] (1.6,7.7)coordinate(v1311) circle(0.1cm);
\draw[fill] (2,7)coordinate(v1313) circle(0.1cm);
\draw[fill=white] (2.4,6.3)coordinate(v1331) circle(0.1cm);
\draw[fill=white] (2.8,5.6)coordinate(v1333) circle(0.1cm);
\draw[fill] (3.2,4.9)coordinate(v3111) circle(0.1cm);
\draw[fill=white] (3.6,4.2)coordinate(v3113) circle(0.1cm);
\draw[fill=white] (4,3.5)coordinate(v3131) circle(0.1cm);
\draw[fill] (4.4,2.8)coordinate(v3133) circle(0.1cm);
\draw[fill=white] (4.8,2.1)coordinate(v3311) circle(0.1cm);
\draw[fill=white] (5.2,1.4)coordinate(v3313) circle(0.1cm);
\draw[fill] (5.6,0.7)coordinate(v3331) circle(0.1cm);
\draw[fill=white] (-0.4,8.4)coordinate(v1123) circle(0.1cm);
\draw[fill=white] (0.4,8.4)coordinate(v1132) circle(0.1cm);
\draw[fill=white] (-1.2,7)coordinate(v1213) circle(0.1cm);
\draw[fill=white] (1.2,7)coordinate(v1312) circle(0.1cm);
\draw[fill=white] (-0.8,6.3)coordinate(v1231) circle(0.1cm);
\draw[fill=white] (0.8,6.3)coordinate(v1321) circle(0.1cm);
\draw[fill=white] (-2,5.6)coordinate(v1223) circle(0.1cm);
\draw[fill] (-1.2,5.6)coordinate(v1232) circle(0.1cm);
\draw[fill=white] (-0.4,5.6)coordinate(v1233) circle(0.1cm);
\draw[fill=white] (2,5.6)coordinate(v1332) circle(0.1cm);
\draw[fill] (1.2,5.6)coordinate(v1323) circle(0.1cm);
\draw[fill=white] (0.4,5.6)coordinate(v1322) circle(0.1cm);
\draw[fill=white] (-2.8,4.2)coordinate(v2113) circle(0.1cm);
\draw[fill=white] (2.8,4.2)coordinate(v3112) circle(0.1cm);
\draw[fill=white] (-2.4,3.5)coordinate(v2131) circle(0.1cm);
\draw[fill=white] (2.4,3.5)coordinate(v3121) circle(0.1cm);
\draw[fill=white] (-3.6,2.8)coordinate(v2123) circle(0.1cm);
\draw[fill=white] (-2.8,2.8)coordinate(v2132) circle(0.1cm);
\draw[fill] (-2,2.8)coordinate(v2133) circle(0.1cm);
\draw[fill=white] (3.6,2.8)coordinate(v3132) circle(0.1cm);
\draw[fill=white] (2.8,2.8)coordinate(v3123) circle(0.1cm);
\draw[fill] (2,2.8)coordinate(v3122) circle(0.1cm);
\draw[fill=white] (-1.6,2.1)coordinate(v2311) circle(0.1cm);
\draw[fill=white] (1.6,2.1)coordinate(v3211) circle(0.1cm);
\draw[fill=white] (-4.4,1.4)coordinate(v2213) circle(0.1cm);
\draw[fill=white] (-2,1.4)coordinate(v2312) circle(0.1cm);
\draw[fill=white] (-1.2,1.4)coordinate(v2313) circle(0.1cm);
\draw[fill=white] (4.4,1.4)coordinate(v3312) circle(0.1cm);
\draw[fill=white] (2,1.4)coordinate(v3213) circle(0.1cm);
\draw[fill=white] (1.2,1.4)coordinate(v3212) circle(0.1cm);
\draw[fill] (-4,0.7)coordinate(v2231) circle(0.1cm);
\draw[fill] (-2.4,0.7)coordinate(v2321) circle(0.1cm);
\draw[fill] (-0.8,0.7)coordinate(v2331) circle(0.1cm);
\draw[fill] (4,0.7)coordinate(v3321) circle(0.1cm);
\draw[fill] (2.4,0.7)coordinate(v3231) circle(0.1cm);
\draw[fill] (0.8,0.7)coordinate(v3221) circle(0.1cm);

\end{tikzpicture}

\bigskip

{\bf Figure 2:} The set $D_{3,4}$ of filled vertices in $S(K_3,4).$
\end{center}

\medskip

In the rest of this section, we aim to describe the properties of $D_{n,t}.$
\begin{remark}  \label{rem_dnt}
Some quick observations involved with $D_{n,t}$ are given below.
\begin{enumerate}
\item[(i)] We verify that $1^t \in D_{n,t}$ while all of the vertices $\{\alpha^t\mid \alpha\in [n]\setminus\{1\}\}$ are not in $D_{n,t}$ so that Definition~\ref{defn_dnt} is well-defined. Since the vertices in $E_2(\textbf{v})$ and $E_3(\textbf{v})$ are obviously with non-constant entries, we focus on $E_1(\textbf{v}).$ Clearly, $E_1(\textbf{v})$ contains a vertex in $D_{n,t}$ of constant entry if and only if $\textbf{v}$ is a vertex in $D_{n,t-2}$ of constant entry. Moreover, $1$ and $11$ are the only vertices in $D_{n,1}$ and $D_{n,2}$ of constant entry, respectively. The claim immediately follows by induction.
\item[(ii)] Directly from Definition~\ref{defn_dnt}, $|E_1(\textbf{v})|=n$ and $|E_2(\textbf{v})|=(n-1)^2$ for $\textbf{v}\in D_{n,t-2}$ (if applicable), and $|E_3(\textbf{v})|=n-1$ for $\textbf{v}\in D_{n,t-2}\setminus\{1^{t-2}\}.$
\end{enumerate}
\end{remark}

\medskip

To simplify the notation, let $D_{n,t}^*$ denote the set $D_{n,t}\setminus\{1^t\}$ throughout this paper.
\begin{lemma}   \label{lem_disj_union}
Let $n,t$ be positive integers not less than 3. If $t$ is odd, then the sets $E_1(\textbf{u}),$ $E_2(\textbf{v}),$ and $E_3(\textbf{w})$ are pairwise disjoint for $\textbf{u},\textbf{v}\in D_{n,t-2},$ and $\textbf{w}\in D_{n,t-2}^*.$ If $t$ is even, then the sets $E_1(\textbf{u}),$ $E_2(\textbf{v}),$ $E_3(\textbf{w}),$ and $\{1^{t-2}\alpha1\mid \alpha\in[n]\}$ are pairwise disjoint for $\textbf{u},\textbf{v},\textbf{w}\in D_{n,t-2}^*.$ Additionally, for each $1\leq i\leq 3,$ $E_i(\textbf{u})$ and $E_i(\textbf{v})$ are disjoint if $\textbf{u}\neq \textbf{v}$ in their proper domain.
\end{lemma}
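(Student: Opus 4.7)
The plan is to compare coordinate patterns and exploit the fact that each of the sets $E_1(\textbf{v})$, $E_2(\textbf{v})$, $E_3(\textbf{v})$, together with $\{1^{t-2}\alpha 1 : \alpha\in[n]\}$ in the even case, imposes a rigid template on the last few coordinates of its strings. Since each vertex in $S(K_n,t)$ is a string of length $t$, disjointness will follow by reading off a few positions of a hypothetical common vertex and extracting a contradiction.

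For the cross-type disjointness, I would first separate $E_1$ from $E_2$ and $E_3$ by inspecting positions $t-1$ and $t$: every vertex in $E_1(\textbf{u})$ satisfies $x_{t-1}=x_t=\alpha$, while every vertex in $E_2(\textbf{v})$ has $x_{t-1}=\beta\ne v_{t-2}=x_t$ and every vertex in $E_3(\textbf{w})$ has $x_{t-1}=\alpha\ne w_\ell=x_t$ by definition. Then $E_2$ is separated from $E_3$ by positions $t-2$ and $t$: in $E_2(\textbf{v})$ these entries are $\alpha$ and $v_{t-2}$ with $\alpha\ne v_{t-2}$, whereas in $E_3(\textbf{w})$ the block $w_\ell^{t-\ell-2}$ covers positions $\ell+1$ through $t-2$ (using $\ell\le t-3$), so position $t-2$ and position $t$ both equal $w_\ell$.

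For the same-type disjointness, I would argue that each $E_i(\textbf{v})$ uniquely determines $\textbf{v}$. The sets $E_1$ and $E_2$ are routine: the first $t-2$ coordinates of any element of $E_1(\textbf{v})$ are exactly $\textbf{v}$, and for $E_2(\textbf{v})$ the first $t-3$ coordinates give $v_1,\ldots,v_{t-3}$ while the last coordinate gives $v_{t-2}$. The case of $E_3$ is the delicate step: given $x\in E_3(\textbf{v})$, I would read $v_\ell=x_t$; invoke the maximality of $\ell$ in Definition~\ref{defn_dnt} (which forces $v_{\ell+1}=\cdots=v_{t-2}$) to conclude that $x_i=v_\ell$ for all $\ell<i\le t-2$ while $x_\ell=v_{\ell+1}\ne v_\ell$; deduce that $\ell$ is the largest index $i\le t-3$ with $x_i\ne x_t$; and finally recover $v_{\ell+1}=x_\ell$ and $v_i=x_i$ for $i<\ell$. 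This is the main obstacle, since it is where the maximality condition defining $\ell(\textbf{v})$ has to be unpacked carefully.

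In the even case there is the extra set $\{1^{t-2}\alpha 1:\alpha\in[n]\}$, which I would check against each $E_i(\textbf{v})$ with $\textbf{v}\in D_{n,t-2}^*$: its first $t-2$ coordinates are all $1$, so it cannot lie in $E_1(\textbf{v})$ unless $\textbf{v}=1^{t-2}$, which is excluded; matching against $E_2(\textbf{v})$ forces $v_{t-2}=1$ from the last coordinate while demanding that position $t-2$ differ from $v_{t-2}=1$, a contradiction; and matching against $E_3(\textbf{v})$ forces $v_\ell=1$ from the last coordinate while position $\ell\le t-2$ is $1$ in the central set but equals $v_{\ell+1}\ne v_\ell=1$ in $E_3(\textbf{v})$, again a contradiction. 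All remaining checks reduce to straightforward positional comparisons of the same kind.
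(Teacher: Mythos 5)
Your proposal is correct and takes essentially the same route as the paper: disjointness across types via the coordinate patterns at positions $t-1,t$ (for $E_1$ versus $E_2,E_3$) and $t-2,t$ (for $E_2$ versus $E_3$), plus positional checks against $\{1^{t-2}\alpha 1\}$ in the even case. Your explicit reconstruction of $\ell(\textbf{v})$ and $\textbf{v}$ from an element of $E_3(\textbf{v})$ is actually more careful than the paper, which dismisses the same-type disjointness as following ``directly from the definition.''
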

\begin{proof}
By Definition~\ref{defn_dnt}, every vertex in $E_1(\textbf{u})$ has the same entries in the last two entries, while different for each vertex in $E_2(\textbf{v})\cup E_3(\textbf{w}).$ Therefore, $E_1(\textbf{u})$ and $E_2(\textbf{v})\cup E_3(\textbf{w})$ have no intersection. Also, the $(t-2)$-th and last entries are identical for every vertex in $E_3(\textbf{w})$, while distinct for any vertex in $E_2(\textbf{v}).$ Hence, $E_2(\textbf{v})$ and $E_3(\textbf{w})$ have no intersection. Then, we deal with the set $\{1^{t-2}\alpha1\mid \alpha\in[n]\}$ as $n$ is even. For $1\leq i\leq 3$ and $\textbf{u}\in D_{n,t-2}^*,$ we notice that all vertices in $E_i(\textbf{u})$ do not have ones on all entries other than the $(t-1)$-th position, and thus $E_i(\textbf{u})\cap\{1^{t-2}\alpha 1\mid\alpha\in[n]\}$ is empty. Lastly, for $1\leq i\leq 3$ and $\textbf{u}\neq \textbf{v}$ in their proper domain, the fact $E_i(\textbf{u})\cap E_i(\textbf{v})=\emptyset$ can be attained directly from the definition of $E_i$. The result follows.
\end{proof}

\medskip

Immediately from Lemma~\ref{lem_disj_union}, we count the number of vertices in $D_{n,t}.$
\begin{lemma}   \label{lem_cardDnt}
For positive integers $n\geq 2$ and $t,$ the cardinality of $D_{n,t}$ is
$$|D_{n,t}|=\left\lceil\frac{n^t}{n+1}\right\rceil.$$
\end{lemma}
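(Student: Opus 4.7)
The plan is a straightforward induction on $t$ in steps of two, separating the odd and even cases, with Lemma~\ref{lem_disj_union} doing all the combinatorial work and Remark~\ref{rem_dnt}(ii) supplying the sizes of the blocks.

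First I would dispose of the base cases by direct inspection: $|D_{n,1}|=1=\lceil n/(n+1)\rceil$ and $|D_{n,2}|=n=\lceil n^2/(n+1)\rceil$ (using $n^2=(n-1)(n+1)+1$). For the closed form, the key numerical observation is that $n\equiv -1\pmod{n+1}$, so $n^t\equiv (-1)^t\pmod{n+1}$; hence
\begin{equation*}
\left\lceil\frac{n^t}{n+1}\right\rceil
=\begin{cases}\dfrac{n^t+1}{n+1}&\text{if }t\text{ is odd,}\\[4pt] \dfrac{n^t+n}{n+1}&\text{if }t\text{ is even.}\end{cases}
\end{equation*}
These are the two formulas I will verify inductively.

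For the inductive step I would use Lemma~\ref{lem_disj_union} to write $D_{n,t}$ as a disjoint union and then apply Remark~\ref{rem_dnt}(ii) giving $|E_1(\mathbf v)|=n$, $|E_2(\mathbf v)|=(n-1)^2$, and $|E_3(\mathbf v)|=n-1$. The two cases of Definition~\ref{defn_dnt} then collapse nicely: when $t$ is odd,
\begin{equation*}
|D_{n,t}|=\bigl(n+(n-1)^2\bigr)+(|D_{n,t-2}|-1)\bigl(n+(n-1)^2+(n-1)\bigr)=n^2\,|D_{n,t-2}|-n+1,
\end{equation*}
using the pleasant identities $n+(n-1)^2=n^2-n+1$ and $n+(n-1)^2+(n-1)=n^2$; and when $t$ is even,
\begin{equation*}
|D_{n,t}|=n+(|D_{n,t-2}|-1)\,n^2.
\end{equation*}

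Finally I would plug the inductive hypothesis into each recursion and simplify. In the odd case, substituting $|D_{n,t-2}|=(n^{t-2}+1)/(n+1)$ gives $n^2(n^{t-2}+1)/(n+1)-n+1=(n^t+1)/(n+1)$ after combining over $n+1$; in the even case, substituting $|D_{n,t-2}|=(n^{t-2}+n)/(n+1)$ yields $n+n^2(n^{t-2}-1)/(n+1)=(n^t+n)/(n+1)$. I do not foresee a real obstacle: Lemma~\ref{lem_disj_union} already handles the only delicate point (absence of double-counting), and everything else is bookkeeping. The one place to be careful is to confirm, in the even case, that the extra block $\{1^{t-2}\alpha1\mid\alpha\in[n]\}$ contributes exactly $n$ (it does, since all $n$ vertices are distinct) and that it replaces the would-be contribution $E_1(1^{t-2})\cup E_2(1^{t-2})\cup E_3(1^{t-2})$ of the odd case; this is precisely why the odd and even recursions differ by the additive term $-n+1$ versus $0$, and both reconcile with the shift in the ceiling formula.
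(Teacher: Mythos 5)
Your proposal is correct and follows essentially the same route as the paper: induction on $t$ in steps of two, with base cases $t=1,2$, the recursions $|D_{n,t}|=n+(n-1)^2+n^2(|D_{n,t-2}|-1)$ (odd) and $|D_{n,t}|=n+n^2(|D_{n,t-2}|-1)$ (even) coming from Lemma~\ref{lem_disj_union} and Remark~\ref{rem_dnt}(ii), and the parity-dependent evaluation of the ceiling. The only differences are cosmetic (you make the congruence $n^t\equiv(-1)^t\pmod{n+1}$ explicit, which the paper leaves implicit).
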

\begin{proof}
We fix $n$ and prove the result by induction on $t$ in 2 cases: $t$ is odd and $t$ is even.

\smallskip

When $t$ is odd, we have $|D_{n,1}|=|\{1\}|=1=\left\lceil n/(n+1)\right\rceil.$ Suppose that $|D_{n,t-2}|=\lceil n^{t-2}/(n+1)\rceil$ for some odd $t\geq 3.$
Then by Definition~\ref{defn_dnt}(i),
\begin{eqnarray}
|D_{n,t}| &=& n+(n-1)^2+n^2\cdot(|D_{n,t-2}|-1)
\label{eq_cardDnt_odd} \\
&=& n+(n-1)^2+n^2\cdot\left(\frac{n^{t-2}+1}{n+1}-1\right)
\nonumber \\
&=& \frac{n^t+1}{n+1} = \left\lceil \frac{n^t}{n+1}\right\rceil,
\nonumber
\end{eqnarray}
where the equality in \eqref{eq_cardDnt_odd} can be referred to Remark~\ref{rem_dnt}(ii).

\smallskip

Let $t$ be even. Then $|D_{n,2}|=|\{\alpha1\mid\alpha\in[n]\}|=n=\left\lceil n^2/(n+1)\right\rceil.$ Assume that $|D_{n,t-2}|=\left\lceil n^{t-2}/(n+1)\right\rceil$ for some even $t\geq 4.$ Then by Definition~\ref{defn_dnt}(ii),
\begin{eqnarray}
|D_{n,t}| &=& n+n^2\cdot(|D_{n,t-2}|-1)
\label{eq_cardDnt_even} \\
&=& n+n^2\cdot\left(\frac{n^{t-2}+n}{n+1}-1\right)
\nonumber \\
&=& \frac{n^t+n}{n+1} = \left\lceil \frac{n^t}{n+1}\right\rceil,
\nonumber
\end{eqnarray}
where the equality in \eqref{eq_cardDnt_even} is referred to Remark~\ref{rem_dnt}(ii). The result follows.
\end{proof}

\medskip

In Lemma~\ref{lem_disj_union}, we show that all vertices are distinct in $E_1,$ $E_2,$ and $E_3.$ Moreover, by observing the neighborhood, they are separated far away. Recall that the {\it distance} between two vertices in a simple graph is the number of edges in a shortest path connecting them.
\begin{lemma}   \label{lem_noCommonN}
Let $n\geq 2$ and $t$ be positive integers.
\begin{enumerate}
\item[(i)] If $t$ is odd, then every pair of distinct vertices in $D_{n,t}$ have distance at least $3$ in $S(K_n,t).$
\item[(ii)] If $t$ is even, then every pair of distinct vertices in $D_{n,t}^*$ have distance at least $3$ in $S(K_n,t).$
\end{enumerate}
\end{lemma}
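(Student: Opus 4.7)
The plan is to induct on $t$, with base cases $t=1$ (part (i), vacuous since $D_{n,1}=\{1\}$) and $t=2$ (part (ii)). In the latter, a direct inspection of the adjacency rule yields $N_{S(K_n,2)}[\alpha 1]=\{\alpha 1,\alpha 2,\ldots,\alpha n,1\alpha\}$, and these closed neighborhoods are disjoint for distinct $\alpha,\beta\in[n]\setminus\{1\}$. For the inductive step, I assume the lemma for $t-2$ and let $u,v$ be distinct vertices of $D_{n,t}$ (resp.\ $D_{n,t}^*$). By Lemma~\ref{lem_disj_union}, each has a unique \emph{source}: some $\textbf{a}\in D_{n,t-2}$ together with a type label in $\{1,2,3\}$ telling which $E_i(\textbf{a})$ contains it, or (in the even case with $\textbf{a}=1^{t-2}$) the label ``extra set'' for $\{1^{t-2}\alpha 1:\alpha\in[n]\}$.

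The central tool is the truncation projection $\pi(v_1\cdots v_t)=v_1\cdots v_{t-2}$ from $V(S(K_n,t))$ to $V(S(K_n,t-2))$. Inspecting the adjacency rule, every edge of $S(K_n,t)$ either collapses to a vertex under $\pi$ (when its pivot position $s$ is $t-1$ or $t$, which I call \emph{intra-copy}) or projects to an edge of $S(K_n,t-2)$ (when $s\leq t-2$, which I call \emph{inter-copy}). Unwinding the definitions of $E_1,E_2,E_3$: $\pi(u)=\textbf{a}$ when $u\in E_1(\textbf{a})$ or $u$ lies in the extra set, while $\pi(u)\in N_{S(K_n,t-2)}(\textbf{a})$ when $u\in E_2(\textbf{a})\cup E_3(\textbf{a})$. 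Moreover, if $u$ lies in $E_2(\textbf{a})\cup E_3(\textbf{a})$ or in the extra set, then $u$ has no inter-copy edge (its last two coordinates are unequal, violating the requirement $u_{s+1}=u_{s+2}=\cdots=u_t$ for any pivot $s\leq t-2$), while if $u\in E_1(\textbf{a})$ it admits at most one inter-copy edge. Hence any path $u\leadsto v$ in $S(K_n,t)$ has length at least $d_{S(K_n,t-2)}(\pi(u),\pi(v))+\mathbb{1}_u+\mathbb{1}_v$, where $\mathbb{1}_u$ indicates that $u$ has no inter-copy edge.

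When the sources $\textbf{a},\textbf{b}$ of $u,v$ differ, the induction hypothesis gives $d_{S(K_n,t-2)}(\textbf{a},\textbf{b})\geq 3$, and the triangle inequality in $S(K_n,t-2)$ bounds $d(\pi(u),\pi(v))\geq 3-[\pi(u)\neq\textbf{a}]-[\pi(v)\neq\textbf{b}]$. A short case split on the type of each endpoint, using that $\pi(u)\neq\textbf{a}$ precisely when $u\in E_2(\textbf{a})\cup E_3(\textbf{a})$ (and then also $\mathbb{1}_u=1$), confirms that the inter-copy and forced opening/closing intra-copy edges together contribute total length at least $3$ in every configuration. When instead $u,v$ share the same source, I enumerate the finitely many type-pair combinations within $E_1(\textbf{a})\cup E_2(\textbf{a})\cup E_3(\textbf{a})$ (and pairs inside the extra set in the even case), compute the closed neighborhoods directly from the adjacency rule, and verify disjointness; same-source $E_1$ pairs $\textbf{a}\alpha\alpha,\textbf{a}\beta\beta$ reduce to the computation inside $\textbf{a}S(K_n,2)$, which is exactly the $t=2$ base-case analysis transplanted to the diagonal vertices.

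The main obstacle I anticipate is the same-source case, since the projection tool is too weak there (both $\pi(u)$ and $\pi(v)$ lie in $\{\textbf{a}\}\cup N_{S(K_n,t-2)}(\textbf{a})$) and the verification becomes an exhaustive enumeration across the three types, each with its own intricate coordinate pattern; the $E_3$ type further depends on the parameter $\ell(\textbf{a})$. The different-source case is cleaner once the inter-copy/intra-copy bookkeeping is established, but upgrading the naive projection bound $d(u,v)\geq 1$ to $d(u,v)\geq 3$ genuinely relies on the ``forced'' intra-copy edges tabulated in the second paragraph.
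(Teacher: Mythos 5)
Your overall strategy is the same as the paper's: induct on $t$ by parity, project onto the first $t-2$ coordinates, use the induction hypothesis to separate vertices coming from distinct members of $D_{n,t-2}$, and fall back on a direct coordinate-level enumeration for pairs sharing the same source. Your inter-copy/intra-copy edge bookkeeping is a more elaborate version of the paper's simpler observation that the first $t-2$ entries of anything in $N_{n,t}[\textbf{x}]$, for $\textbf{x}\in E_i(\textbf{u})$, form a vertex of $N_{n,t-2}[\textbf{u}]$, so that a common neighbor would project into $N_{n,t-2}[\textbf{u}]\cap N_{n,t-2}[\textbf{v}]=\emptyset$. However, there are two problems. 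First, in the even case your different-source step asserts that the induction hypothesis gives $d_{S(K_n,t-2)}(\textbf{a},\textbf{b})\geq 3$; this is unjustified, and in fact false, when $\textbf{a}=1^{t-2}$ is the source of an extra-set vertex, because $1^{t-2}\notin D_{n,t-2}^*$ and part (ii) of the hypothesis says nothing about it. Concretely, $d(11,21)=2$ in $S(K_n,2)$ via the vertex $12$. The case can be rescued --- you only need $d(1^{t-2},\textbf{b})\geq 2$, i.e.\ that no vertex $1^{t-3}\beta$ with $\beta\neq 1$ lies in $D_{n,t-2}^*$, but that is a separate small fact requiring its own proof (the paper supplies exactly such an argument when it treats the set $F=\{1^{t-2}\alpha 1\mid\alpha\in[n]\setminus\{1\}\}$). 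As written, this step of your proposal rests on a false statement.

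Second, the same-source case, which you correctly identify as the crux, is where essentially all of the paper's labor lies: six type-pairs $(i,j)$ with $1\le i\le j\le 3$, each resolved by locating the unique inter-copy neighbor $\textbf{x}^\vdash$ of an $E_1$- or $E_3$-vertex and deriving a coordinate contradiction, with the $E_3$ cases further depending on the parameter $\ell(\textbf{u})$. Your proposal describes this as ``compute the closed neighborhoods directly and verify disjointness'' but carries out none of it; in particular the remark that the same-source $E_1$ pair ``reduces to the $t=2$ base case'' overlooks the inter-copy neighbor $\textbf{x}^\vdash$, which is precisely the nontrivial part of that case (the intra-copy neighbors $\textbf{a}\alpha\delta$ and $\textbf{a}\beta\delta'$ are trivially distinct when $\alpha\neq\beta$). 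So the proposal is a correct outline of the paper's own proof, but the two places where real verification is required --- the interaction of the extra set with the rest of $D_{n,t}^*$, and the six same-source configurations --- are respectively misargued and left undone.
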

\begin{proof} For a vertex $\textbf{v}$ in $S(K_n,t),$ let $N_{n,t}[\textbf{v}]$ denote the set containing the vertex $\textbf{v}$ and its neighbors in $S(K_n,t)$. Equivalently, two vertices $\textbf{u}$ and $\textbf{v}$ have distance at least $3$ if and only if $N_{n,t}[\textbf{u}]$ and $N_{n,t}[\textbf{v}]$ have no intersection. In the following, we prove the result by induction on $t$ as $t$ is odd and even, respectively.

\smallskip

For (i), when $t$ is odd, the result holds for $D_{n,1}=\{1\},$ and we assume that it holds for $D_{n,t-2}$ for some odd $t\geq 3.$ By observing Definition~\ref{defn_dnt}, for each $1\leq i\leq 3$ if $\textbf{x}\in E_i(\textbf{u}),$ then the first $t-2$ entries of a vertex in $N_{n,t}[\textbf{x}]$ must be a vertex in $N_{n,t-2}[\textbf{u}].$ Therefore, for two distinct vertices $\textbf{x}\in E_i(\textbf{u})$ and $\textbf{y}\in E_j(\textbf{v})$ in $D_{n,t},$ where $1\leq i,j\leq 3,$ if $\textbf{u}$ and $\textbf{v}$ are distinct vertices in $D_{n,t-2}$ then $N_{n,t}[\textbf{x}]$ and $N_{n,t}[\textbf{y}]$ must have no intersection, or otherwise the first $t-2$ entries of an element in $N_{n,t}[\textbf{x}]\cap N_{n,t}[\textbf{y}]$ will be a vertex in $N_{n,t-2}[\textbf{u}]\cap N_{n,t-2}[\textbf{v}]$ so that the distance of $\textbf{u}$ and $\textbf{v}$ is less than $3,$ which contradicts to the induction hypothesis. Now, suppose that $\textbf{x}$ and $\textbf{y}$ are two distinct vertices in $D_{n,t}$ such that $\textbf{x}\in E_i(\textbf{u})$ and $\textbf{y}\in E_j(\textbf{u}),$ for some $1\leq i,j\leq 3.$ Additionally, for a non-constant vertex $\textbf{v}=v_1\ldots v_t$ in $S(K_n,t),$ let $\textbf{v}^\vdash$ denote the unique neighbor $v_1\ldots v_{\ell-1}v_{\ell+1}v_{\ell}\ldots v_{\ell}$ of $\textbf{v}$ obtained by flipping the entries, where $1\leq\ell<t$ is the largest integer such that $v_\ell\neq v_{\ell+1}.$ Note that $(\textbf{v}^\vdash)^\vdash=\textbf{v},$ and hence $\textbf{x}=\textbf{y}$ if and only if $\textbf{x}^\vdash=\textbf{y}^\vdash.$ The discussion can be partitioned into the following cases.

\smallskip

\noindent{\textbf{Case 1. $i=1$ and $j=1.$}}\\
Assume that $\textbf{x}=\{u_1\ldots u_{t-2}\alpha\alpha\}$ and $\textbf{y}=\{u_1\ldots u_{t-2}\beta\beta\}$, where $\alpha\neq\beta.$ Then $N_{n,t}[\textbf{x}]=\{u_1\ldots,u_{t-2}\alpha\alpha'\mid\alpha'\in[n]\}\cup\{\textbf{x}^\vdash\},$ where $\textbf{x}^\vdash$ exists if $\textbf{x}$ is not the all ones vertex. Therefore, if $N_{n,t}[\textbf{x}]\cap N_{n,t}[\textbf{y}]$ is nonempty, then we may assume
\begin{equation}    \label{eq_i1j1}
\textbf{x}^\vdash=u_1\ldots u_{\ell-1}\alpha u_\ell\ldots u_\ell=u_1\ldots u_{t-2}\beta\lambda\in N_{n,t}[\textbf{y}],
\end{equation}
where $\ell\leq t-2$ is the largest number satisfying $u_\ell\neq \alpha$ and $\lambda\in[n].$ However, the $\ell$-th entry in~\eqref{eq_i1j1} implies $u_\ell=\alpha,$ which is a contradiction.

\smallskip

\noindent{\textbf{Case 2. $i=1$ and $j=2.$}}\\
Assume that $\textbf{x}=\{u_1\ldots u_{t-2}\alpha\alpha\}$ and $\textbf{y}=\{u_1\ldots u_{t-3}\beta\lambda u_{t-2}\}$, where $\beta,\lambda\in [n]\setminus\{u_{t-2}\}.$ Since every vertex in $N_{n,t}[\textbf{y}]$ is of the $(t-2)$-th entry $\beta,$ hence, if $N_{n,t}[\textbf{x}]$ and $N_{n,t}[\textbf{y}]$ has intersection then the only possibility is
\begin{equation}    \label{eq_i1j2}
\textbf{x}^\vdash=u_1\ldots u_{\ell-1}\alpha u_\ell\ldots u_\ell=u_1\ldots u_{t-3}\beta\lambda\delta\in N_{n,t}[\textbf{y}],
\end{equation}
where $\ell\leq t-2$ is the largest number satisfying $u_\ell\neq \alpha$ and $\delta\in[n].$ However, if $\ell=t-2$ then a contradiction occurs because the $(t-1)$-th entry in \eqref{eq_i1j2} tells that $u_{t-2}=\lambda;$ if $\ell<t-2$ then the $\ell$-th entry in~\eqref{eq_i1j2} implies $u_\ell=\alpha,$ which attains a contradiction also.

\smallskip

\noindent{\textbf{Case 3. $i=2$ and $j=2.$}}\\
Assume that $\textbf{x}=\{u_1\ldots u_{t-3}\alpha\beta u_{t-2}\}$ and $\textbf{y}=\{u_1\ldots u_{t-3}\lambda\delta u_{t-2}\}$, where $\alpha,\beta,\lambda,\delta\in [n]\setminus\{u_{t-2}\}$ with $(\alpha,\beta)\neq (\lambda,\delta).$ Notice that that the $(t-3)$-th entries of every vertex in $N_{n,t}[\textbf{x}]$ and $N_{n,t}[\textbf{y}]$ are $\alpha$ and $\lambda,$ respectively. Hence, if $N_{n,t}[\textbf{x}]\cap N_{n,t}[\textbf{y}]$ is not empty then $\alpha=\lambda$ so that $\beta\neq \delta,$ and we may assume
\begin{equation}    \label{eq_i2j2}
\textbf{x}^\vdash=u_1\ldots u_{t-3}\alpha u_{t-2}\beta=u_1\ldots u_{t-3}\lambda\delta\epsilon\in N_{n,t}[\textbf{y}],
\end{equation}
where $\epsilon\in [n].$ However, a contradiction happens since the $(t-1)$-th entry in~\eqref{eq_i2j2} says that $\delta=u_{t-2}.$

\smallskip

\noindent{\textbf{Case 4. $i=1$ and $j=3.$}}\\
For the following cases involved with $j=3,$ let $\textbf{u}\in D_{n,t-2}^*$ and $h<t-2$ be the largest number satisfying $u_h\neq u_{h+1}.$ Assume that $\textbf{x}=\{u_1\ldots u_{t-2}\alpha\alpha\}$ and $\textbf{y}=\{u_1\ldots u_{h-1}u_{h+1}u_h\ldots u_h\beta u_h\}$, where $\beta\in [n]\setminus \{u_h\}.$ It is clear that the $h$-th entry of each vertex in $N_{n,t}[\textbf{y}]$ is $u_{h+1}.$ Thus, if there exists an element in $N_{n,t}[\textbf{x}]\cap N_{n,t}[\textbf{y}]$ then it is
\begin{equation}    \label{eq_i1j3}
\textbf{x}^\vdash=u_1\ldots u_{\ell-1}\alpha u_\ell\ldots u_\ell=u_1\ldots u_{h-1}u_{h+1}u_h\ldots u_h\beta\lambda\in N_{n,t}[\textbf{y}],
\end{equation}
where $\ell\leq t-2$ is the largest number satisfying $u_\ell\neq \alpha$ and $\lambda\in[n].$ Consequently, the only possibility is $\ell=h$ and the last $t-h$ entries are all $u_h,$ which contradicts to $\beta\neq u_h.$

\smallskip

\noindent{\textbf{Case 5. $i=2$ and $j=3.$}}\\
Assume that $\textbf{x}=\{u_1\ldots u_{t-3}\alpha\beta u_{t-2}\}$ and $\textbf{y}=\{u_1\ldots u_{h-1}u_{h+1}u_h\ldots u_h\lambda u_h\}$, where $\alpha,\beta\in [n]\setminus \{u_{t-2}\}$ and $\lambda\in [n]\setminus\{u_h\}.$ From the fact that any vertex in $N_{n,t}[\textbf{x}]$ has $h$-th entry $u_h,$ while $u_{h+1}$ for those in $N_{n,t}[\textbf{y}],$ it is obvious that $N_{n,t}[\textbf{x}]\cap N_{n,t}[\textbf{y}]$ is the empty set.

\smallskip

\noindent{\textbf{Case 6. $i=3$ and $j=3.$}}\\
Assume that $\textbf{x}=\{u_1\ldots u_{h-1}u_{h+1}u_h\ldots u_h\alpha u_h\}$ and $\textbf{y}=\{u_1\ldots u_{h-1}u_{h+1}u_h\ldots u_h\beta u_h\}$, where $\alpha,\beta\in [n]\setminus\{u_h\}$ with $\alpha\neq\beta$. One can see that both of $\textbf{x}$ and $\textbf{y}$ have distinct last 2 entries, and they are different in the $(t-1)$-th entry only. Therefore, if $N_{n,t}[\textbf{x}]\cap N_{n,t}[\textbf{y}]$ is nonempty, then we may assume
\begin{equation}    \label{eq_i3j3}
\textbf{x}^\vdash=u_1\ldots u_{h-1}u_{h+1}u_h\ldots u_h\alpha=u_1\ldots u_{h-1}u_{h+1}u_h\ldots u_h\beta\lambda\in N_{n,t}[\textbf{y}],
\end{equation}
where $\lambda\in [n].$ However, the $(t-1)$-th entry in~\eqref{eq_i3j3} indicates $\beta=u_h,$ which is a contradiction.

\smallskip

For (ii), as $t$ is even, we can check that the result holds for $D_{n,2}^*=\{\alpha 1\mid\alpha\in [n]\setminus\{1\}\},$ and assume that it holds for $D_{n,t-2}^*$ for some even $t\geq 4.$ Similar argument can be made between $E_1,$ $E_2,$ and $E_3$ as what we did for odd $t,$ but there is another set of vertices $F=\{1^{t-2}\alpha 1\mid\alpha\in [n]\setminus\{1\}\}$ in $D_{n,t}^*.$ Nevertheless, for every element $\textbf{x}$ in $F,$ the vertices in $N_{n,t}[\textbf{x}]$ are of all ones in the first $t-2$ entries, while for any $\textbf{y}\in E_i(\textbf{u})$ where $\textbf{u}\in D_{n,t-2}^*$ and $i\in\{1,3\},$ the vertices in $N_{n,t}[\textbf{y}]$ are not constant in the first $t-2$ entries since $\textbf{u}$ is not constant by Remark~\ref{rem_dnt}(i). Moreover, we can see that $D_{n,t-2}$ and $\{1^{t-3}\beta\mid\beta\in [n]\setminus\{1\}\}$ have no intersection, or otherwise $1^{t-2}$ and $1^{t-3}\beta$ are of distance $1$ in $S(K_n,t-2),$ which violates the induction hypothesis. Therefore, for $\textbf{y}\in E_{2}(\textbf{u})$ where $\textbf{u}\in D_{n,t-2}^*,$ the first $t-2$ entries of any element from $N_{n,t}[\textbf{y}]$ are not all ones so that $N_{n,t}[\textbf{x}]\cap N_{n,t}[\textbf{y}]=\emptyset$ for all $\textbf{x}\in F.$ The proof is completed.
\end{proof}

\medskip

We bring out the main property of the vertices set $D_{n,t}.$
\begin{theorem}     \label{thm_Dnt_domin_set}
For positive integers $n\geq 2$ and $t,$ $D_{n,t}$ forms a dominating set of $S(K_n,t).$
\end{theorem}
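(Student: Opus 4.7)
The plan is to prove $D_{n,t}$ dominates $S(K_n,t)$ by induction on $t$, with base cases $t=1$ and $t=2$ following by direct verification: $D_{n,1}=\{1\}$ dominates $K_n=S(K_n,1)$, and in $S(K_n,2)$ every vertex $\alpha\beta$ either equals $\alpha 1\in D_{n,2}$ or is adjacent to $\alpha 1$ via the $K_n$ copy at position $2$. The inductive step exploits a recursive decomposition of $V_{n,t}$ by prefixes of length $t-2$: for each $\textbf{v}\in[n]^{t-2}$, the set $C_{\textbf{v}}=\{\textbf{v}\alpha\beta:\alpha,\beta\in[n]\}$ induces a subgraph isomorphic to $S(K_n,2)$, and unfolding the Sierpi\'{n}ski adjacency rule at positions $s\leq t-2$ shows that $C_{\textbf{v}}$ and $C_{\textbf{w}}$ are joined by exactly one crossing edge $\textbf{v}(w_s)(w_s)-\textbf{w}(v_s)(v_s)$ whenever $\textbf{v},\textbf{w}$ are adjacent at position $s$ in $S(K_n,t-2)$, and by none otherwise.

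For the inductive step, I fix $\textbf{x}=\textbf{v}\alpha\beta$ and split into two cases. If $\textbf{v}\in D_{n,t-2}$, then $D_{n,t}$ contains either $E_1(\textbf{v})=\{\textbf{v}\gamma\gamma:\gamma\in[n]\}$ (for odd $t$, or for even $t$ with $\textbf{v}\neq 1^{t-2}$) or the replacement set $\{1^{t-2}\gamma 1:\gamma\in[n]\}$ (for even $t$ with $\textbf{v}=1^{t-2}$); a direct check in the $S(K_n,2)$ copy $C_{\textbf{v}}$ shows each such $n$-element set dominates the whole copy because $\textbf{v}\alpha\alpha$ (respectively, $1^{t-2}\alpha 1$) is adjacent to $\textbf{v}\alpha\beta$ (respectively, $1^{t-2}\alpha\beta$) via the position-$t$ clique.

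If $\textbf{v}\notin D_{n,t-2}$, the induction hypothesis supplies a neighbor $\textbf{u}\in D_{n,t-2}$ adjacent to $\textbf{v}$ at some Sierpi\'{n}ski position $s\leq t-2$, which forces $v_j=u_s$ for all $s<j\leq t-2$. If $s=t-2$ (so $\textbf{u}=v_1\ldots v_{t-3}u_{t-2}$ with $u_{t-2}\neq v_{t-2}$), then $E_2(\textbf{u})$ deposits the $n-1$ ``column'' vertices $\{\textbf{v}\delta u_{t-2}:\delta\neq u_{t-2}\}$ inside $C_{\textbf{v}}$; in the $S(K_n,2)$ structure these dominate every vertex of $C_{\textbf{v}}$ except the corner $\textbf{v}u_{t-2}u_{t-2}$, which is the endpoint of the crossing edge to $\textbf{u}v_{t-2}v_{t-2}\in E_1(\textbf{u})$. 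If $s<t-2$, the adjacency forces $\textbf{u}$ to be non-constant with $\ell(\textbf{u})=s$, so $E_3(\textbf{u})=\{\textbf{v}\alpha u_\ell:\alpha\neq u_\ell\}\subseteq C_{\textbf{v}}$ and, by the same ``column plus crossing edge'' argument (with $\textbf{u}v_\ell v_\ell\in E_1(\textbf{u})$ dominating the corner $\textbf{v}u_\ell u_\ell$), dominates all of $C_{\textbf{v}}$.

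The main obstacle is the exceptional situation in the even-$t$ recursion where the chosen neighbor happens to be $\textbf{u}=1^{t-2}$: then $E_1(\textbf{u})\not\subseteq D_{n,t}$ and the corner-via-crossing-edge argument breaks. I will resolve this by first observing that the Sierpi\'{n}ski-neighbors of $1^{t-2}$ in $S(K_n,t-2)$ are exactly the vertices of the form $1^{t-3}\gamma$ with $\gamma\neq 1$, so the offending $\textbf{v}$ must take this form; then I will substitute the alternative neighbor $\textbf{u}'=1^{t-4}\gamma 1$, which belongs to $D_{n,t-2}^*$ as a member of the replacement set $\{1^{t-4}\alpha 1:\alpha\in[n]\}$ in the even-$(t-2)$ recursion (or simply $\gamma 1\in D_{n,2}$ when $t=4$) and satisfies $\ell(\textbf{u}')=t-3<t-2$, so the $s<t-2$ subcase applies cleanly with $\textbf{u}'$ in place of $1^{t-2}$.
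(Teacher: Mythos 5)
Your proof is correct, but it takes a genuinely different route from the paper's. The paper first shows (its Lemma~\ref{lem_noCommonN}, a six-case analysis) that the vertices of $D_{n,t}$ (resp.\ $D_{n,t}^*$ for even $t$) are pairwise at distance at least $3$, so their closed neighborhoods are disjoint, and then simply counts: $(n+1)\bigl(|D_{n,t}|-1\bigr)$ plus the contribution of $1^t$ equals exactly $n^t$ by Lemma~\ref{lem_cardDnt}, forcing the union of closed neighborhoods to be all of $V_{n,t}$. You instead give a direct recursive covering argument via the decomposition into the $n^{t-2}$ copies $C_{\mathbf{v}}\cong S(K_n,2)$, exhibiting for each vertex an explicit dominator ($E_1$ inside its own copy, $E_2$ or $E_3$ as a column plus the unique crossing edge to $E_1(\mathbf{u})$ for the corner). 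Your approach is more elementary and self-contained --- it needs neither the distance-$3$ lemma nor the cardinality formula --- and it makes transparent \emph{why} the sets $E_1,E_2,E_3$ are defined as they are; the paper's approach buys the stronger structural fact that $D_{n,t}$ is essentially a perfect (efficient) dominating set, with every vertex dominated exactly once. Your handling of the genuine exceptional case (even $t$, $\mathbf{v}=1^{t-3}\gamma$, whose dominating neighbor in $D_{n,t-2}$ would naively be $1^{t-2}$) by substituting $\mathbf{u}'=1^{t-4}\gamma 1\in D_{n,t-2}^*$ is the key point and it checks out: $\ell(\mathbf{u}')=t-3$, $E_3(\mathbf{u}')=\{\mathbf{v}\alpha\gamma\mid\alpha\neq\gamma\}\subseteq C_{\mathbf{v}}$, and the corner $\mathbf{v}\gamma\gamma$ is caught by $\mathbf{u}'11\in E_1(\mathbf{u}')$. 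One small imprecision: you describe the obstruction as only $E_1(1^{t-2})\not\subseteq D_{n,t}$, but for even $t$ the set $E_2(1^{t-2})$ is also absent from $D_{n,t}$ by Definition~\ref{defn_dnt}(ii), so the whole column argument (not just the corner) fails for $\mathbf{u}=1^{t-2}$; your fix sidesteps both issues since it never invokes $E_1(1^{t-2})$ or $E_2(1^{t-2})$, so nothing breaks.
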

\begin{proof}
By Remark~\ref{rem_dnt}(i), each vertex in $D_{n,t}^*$ is of degree $n,$ while $1^t$ is of degree $n-1.$ If $t$ is odd, then from Lemma~\ref{lem_noCommonN}(i), $N_{n,t}[\textbf{v}]$ are pairwise disjoint for all $\textbf{v}\in D_{n,t}.$ Therefore,
\begin{eqnarray}
\left|\bigcup_{\textbf{v}\in D_{n,t}}N_{n,t}[\textbf{v}]\right|
&=& \sum_{\textbf{v}\in D_{n,t}}N_{n,t}[\textbf{v}]
\nonumber   \\
&=& (n+1)\cdot\left(\left\lceil\frac{n^t}{n+1}\right\rceil-1\right)+n\cdot 1
\label{eq_Dnt_dom_odd}  \\
&=& (n+1)\cdot\left(\frac{n^t+1}{n+1}-1\right)+n = n^t,
\nonumber
\end{eqnarray}
where~\eqref{eq_Dnt_dom_odd} is from Lemma~\ref{lem_cardDnt}.

\smallskip

Next, assume that $t$ is even. By Lemma~\ref{lem_noCommonN}(ii), $N_{n,t}[\textbf{v}]$ are pairwise disjoint for all $\textbf{v}\in D_{n,t}^*.$ Furthermore, none of the vertices in $D_{n,t}^*$ is adjacent to $1^t$ in $S(K_n,t),$ since by Definition~\ref{defn_dnt}(ii) the first $t-1$ entries of each vertex in $D_{n,t}^*$ are not all ones. As a result,
\begin{eqnarray}
\left|\bigcup_{\textbf{v}\in D_{n,t}}N_{n,t}[\textbf{v}]\right|
&=& \left|\bigcup_{\textbf{v}\in D_{n,t}^*}N_{n,t}[\textbf{v}]\right|+\left|N_{n,t}[1^t]\setminus\bigcup_{\textbf{v}\in D_{n,t}^*}N_{n,t}[\textbf{v}]\right|
\nonumber   \\
&\geq& \left(\sum_{\textbf{v}\in D_{n,t}^*}N_{n,t}[\textbf{v}]\right)+\left|\{1^t\}\right|
nonumber   \\
&=& (n+1)\cdot\left(\left\lceil\frac{n^t}{n+1}\right\rceil-1\right)+1
\label{eq_Dnt_dom_even}  \\
&=& (n+1)\cdot\left(\frac{n^t+n}{n+1}-1\right)+1 = n^t,
\nonumber
\end{eqnarray}
where~\eqref{eq_Dnt_dom_even} is from Lemma~\ref{lem_cardDnt}.

\smallskip

The above argument indicates that $D_{n,t}$ and their neighbors include all $n^t$ vertices in $S(K_n,t),$ since $\bigcup_{\textbf{v}\in D_{n,t}}N_{n,t}[\textbf{v}]$ is a subset of the vertex set of $S(K_n,t).$ The result follows.
\end{proof}

\bigskip

\section{Domination in $S(K_n,t)$}      \label{sec_domNum}

In this section, the exact values of domination numbers $\gamma(S(K_n,t))$, Roman domination numbers $\gamma_R(S(K_n,t))$, and double Roman domination numbers $\gamma_{dR}(S(K_n,t))$ of the Sierpi\'{n}ski graphs $S(K_n,t)$ are given.

\medskip

\subsection{Domination numbers}
The vertices set $D_{n,t}$ is verified to be a dominating set for $S(K_n,t)$ in Theorem~\ref{thm_Dnt_domin_set}. Its cardinality $|D_{n,t}|$ is also obtained in Lemma~\ref{lem_cardDnt}. Therefore, we may attain the domination number $\gamma(S(K_n,t))$ as follows.
\begin{theorem}     \label{thm_dom_num}
For every positive integers $n\geq 2$ and $t,$ the domination number of the Sierpi\'{n}ski graph $S(K_n,t)$ is
$$\gamma(S(K_n,t))=\left\lceil\frac{n^t}{n+1}\right\rceil.$$
\end{theorem}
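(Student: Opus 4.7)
The plan is to sandwich $\gamma(S(K_n,t))$ between matching upper and lower bounds, both of $n^t/(n+1)$ rounded up. The upper bound is essentially free at this point: Theorem~\ref{thm_Dnt_domin_set} guarantees that $D_{n,t}$ is a dominating set of $S(K_n,t)$, and Lemma~\ref{lem_cardDnt} computes $|D_{n,t}|=\lceil n^t/(n+1)\rceil$. Combining these gives $\gamma(S(K_n,t))\leq\lceil n^t/(n+1)\rceil$ without further work.

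For the matching lower bound I would use the standard maximum-degree argument. From the adjacency rule defining $S(K_n,t)$, the vertex $v_1\ldots v_t$ has exactly $n-1$ neighbors inside the ``$K_n$-copy'' indexed by $v_1\ldots v_{t-1}$ (these correspond to changing only the last coordinate), plus at most one ``bridge'' neighbor obtained from an index $s<t$; the bridge neighbor exists unless $v_1=v_2=\cdots=v_t$, in which case the vertex is one of the $n$ extreme vertices $\alpha^t$ and has degree $n-1$. In particular $|N_{S(K_n,t)}[v]|\leq n+1$ for every vertex $v$. Consequently, for any dominating set $D$ of $S(K_n,t)$,
\[
n^t=|V(S(K_n,t))|=\Bigl|\bigcup_{v\in D}N_{S(K_n,t)}[v]\Bigr|\leq \sum_{v\in D}|N_{S(K_n,t)}[v]|\leq (n+1)|D|,
\]
so $|D|\geq n^t/(n+1)$, and integrality of $|D|$ forces $|D|\geq\lceil n^t/(n+1)\rceil$.

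The two bounds coincide, establishing the claimed equality. There is no genuine obstacle here: the substantive combinatorial work went into constructing $D_{n,t}$ and verifying via Lemma~\ref{lem_noCommonN} that closed neighborhoods of its vertices (or at worst of $D_{n,t}^{*}$) are disjoint, which is what allowed $|D_{n,t}|$ to achieve the packing bound. The only point to double-check is the ceiling arithmetic in both parities: when $t$ is odd, $n^t\equiv -1\pmod{n+1}$ so $\lceil n^t/(n+1)\rceil=(n^t+1)/(n+1)$, matching the odd case of Lemma~\ref{lem_cardDnt}; when $t$ is even, $n^t\equiv 1\pmod{n+1}$ so $\lceil n^t/(n+1)\rceil=(n^t+n)/(n+1)$, matching the even case. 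Thus no separate parity analysis is required in the proof itself.
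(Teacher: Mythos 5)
Your proposal is correct and follows essentially the same route as the paper: the lower bound via the packing/maximum-degree argument (every closed neighborhood has size at most $n+1$, so any dominating set has size at least $n^t/(n+1)$), and the upper bound by citing Theorem~\ref{thm_Dnt_domin_set} together with Lemma~\ref{lem_cardDnt}. Your extra verification of the degree count and of the ceiling arithmetic in the two parities is more detail than the paper gives, but it is accurate and changes nothing in substance.
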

\begin{proof}
Firstly, since the maximum vertex degree in $S(K_n,t)$ is $n,$ it is straightforward to see that $$\gamma(S(K_n,t))\geq\left\lceil\frac{n^t}{n+1}\right\rceil,$$
since there are $n^t$ vertices in $S(K_n,t).$

\smallskip

Next, since the set $D_{n,t}$ given in Definition~\ref{defn_dnt} is shown to be a dominating set for $S(K_n,t)$ in Theorem~\ref{thm_Dnt_domin_set}, we have
$$\gamma(S(K_n,t))\leq|D_{n,t}|=\left\lceil\frac{n^t}{n+1}\right\rceil,$$
in which the cardinality of $D_{n,t}$ is counted in Lemma~\ref{lem_cardDnt}. The proof is completed.
\end{proof}

\bigskip

\subsection{Roman domination numbers}
In this subsection, we extend the results and proof in Theorem~\ref{thm_dom_num} and obtain the Roman domination numbers of $S(K_n,t).$ Furthermore, we confirm that the equalities hold in Theorem~\ref{thm_R_leq} for all $n,t$.
\begin{theorem}     \label{thm_RDominNum}
For every positive integers $n\geq 2$ and $t,$ the Roman domination number of the Sierpi\'{n}ski graph $S(K_n,t)$ is
$$\gamma_R(S(K_n,t))=\left\{
\renewcommand\arraystretch{1.6}
\begin{array}{ll}
2\left\lceil\frac{n^t}{n+1}\right\rceil & \text{if $t$ is odd}; \\
2\left\lceil\frac{n^t}{n+1}\right\rceil-1 & \text{if $t$ is even}.
\end{array}\right.$$
\end{theorem}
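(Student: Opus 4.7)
The goal is to sandwich $\gamma_R(S(K_n,t))$ between matching upper and lower bounds. For the upper bound I will exhibit explicit Roman dominating functions constructed from the dominating set $D_{n,t}$ of Section~\ref{sec_Dset}. For the lower bound I will invoke the standard inequality $\gamma_R(G)\geq 2|V(G)|/(\Delta(G)+1)$, whose proof comes from the observations that, given any Roman dominating function with level sets $V_0,V_1,V_2$, the set $V_2$ dominates $V_0$ and hence $|V_0|\leq \Delta(G)|V_2|$.

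\textbf{Upper bound.} When $t$ is odd, set $f(v)=2$ for $v\in D_{n,t}$ and $f(v)=0$ otherwise. Theorem~\ref{thm_Dnt_domin_set} shows $f$ is a valid Roman dominating function; its weight is $2|D_{n,t}|=2\lceil n^t/(n+1)\rceil$ by Lemma~\ref{lem_cardDnt}. When $t$ is even I will save one unit by demoting the central vertex $1^t$: put $f(1^t)=1$, $f(v)=2$ for $v\in D_{n,t}^{*}$, and $f(v)=0$ elsewhere; the weight is $2(|D_{n,t}|-1)+1=2\lceil n^t/(n+1)\rceil-1$. The main step requiring verification, and the principal obstacle in the proof, is that this modified $f$ is still Roman, i.e.\ every zero-valued vertex has a neighbor in $D_{n,t}^{*}$. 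If $v\notin D_{n,t}$ has any $D_{n,t}$-neighbor other than $1^t$, we are done; the only concern is a vertex whose $D_{n,t}$-neighbor is exactly $1^t$. Such a $v$ must be one of the $n-1$ neighbors of $1^t$, namely $v=1^{t-1}\alpha$ with $\alpha\in[n]\setminus\{1\}$. A direct application of the adjacency rule (the flip occurring at position $s=t-1$) shows $1^{t-1}\alpha$ is adjacent to $1^{t-2}\alpha 1$, which belongs to $D_{n,t}^{*}$ by Definition~\ref{defn_dnt}(ii). Hence the zero-valued vertex $v$ does see a neighbor of value $2$, and $f$ is valid.

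\textbf{Lower bound.} Plugging $|V|=n^t$ and $\Delta=n$ into the standard inequality yields $\gamma_R(S(K_n,t))\geq 2n^t/(n+1)$. Since $n\equiv -1\pmod{n+1}$ implies $n^t\equiv (-1)^t\pmod{n+1}$, I will complete the argument by taking the appropriate ceiling. For odd $t$, $(n+1)\mid(n^t+1)$, so $2n^t/(n+1)=2\lceil n^t/(n+1)\rceil-2/(n+1)$; because $0<2/(n+1)<1$ for $n\geq 2$, the integer $\gamma_R$ is at least $2\lceil n^t/(n+1)\rceil$. For even $t$, $(n+1)\mid(n^t-1)$, so $2n^t/(n+1)=2(\lceil n^t/(n+1)\rceil-1)+2/(n+1)$, whose ceiling is $2\lceil n^t/(n+1)\rceil-1$. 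In both parities the lower bound matches the upper bound constructed above, finishing the proof.
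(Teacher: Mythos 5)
Your proof is correct and follows essentially the same strategy as the paper: the upper bound comes from the same Roman dominating functions built on $D_{n,t}$, and the lower bound from counting how many vertices the level sets $V_1$ and $V_2$ can cover. Two presentational differences are worth noting. For the lower bound the paper solves the small integer program $\min\{|V_1|+2|V_2|\}$ subject to $|V_1|+(n+1)|V_2|\geq n^t$ (whose optimum for even $t$ genuinely uses $|V_1|=1$), whereas you quote the closed-form bound $\gamma_R(G)\geq 2|V(G)|/(\Delta(G)+1)$ and recover the integer values from the congruence $n^t\equiv(-1)^t\pmod{n+1}$; the two computations yield the same numbers here, so either packaging works. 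For the even-$t$ upper bound you verify validity directly by exhibiting, for each neighbor $1^{t-1}\alpha$ of $1^t$, the adjacent vertex $1^{t-2}\alpha 1\in D_{n,t}^*$; the paper instead appeals to the exact count in the proof of Theorem~\ref{thm_Dnt_domin_set}, which shows that $D_{n,t}^*$ already dominates every vertex except $1^t$. Your explicit adjacency check is arguably the cleaner justification of that step, which is indeed the one point in the argument that requires care.
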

\begin{proof}
Let $f:V(S(K_n,t))\rightarrow \{0,1,2\}$ be a Roman dominating function on $S(K_n,t),$ where $V(S(K_n,t))$ is the set of vertices in $S(K_n,t).$ Suppose that $V_1$ and $V_2$ are the sets of vertices in $S(K_n,t)$ that are valued with 1 and 2 in $f,$ respectively. We have
\begin{eqnarray}
n^t = |V(S(K_n,t))| &=& \left|V_1\cup\bigcup_{\textbf{v}\in V_2} N_{n,t}[\textbf{v}]\right|
\nonumber   \\
&\leq& |V_1|+\sum_{\textbf{v}\in V_2}\left|N_{n,t}[\textbf{v}]\right|
\nonumber   \\
&\leq& |V_1|+(n+1)|V_2|,
\label{eq_RoV1V2}
\end{eqnarray}
where~\eqref{eq_RoV1V2} is from the fact that the maximal vertex degree in $S(K_n,t)$ is $n$. Then, it comes to a linear program: finding $\min\{|V_1|+2|V_2|\}$ provided that the nonnegative integers $|V_1|$ and $|V_2|$ satisfying $|V_1|+(n+1)|V_2|\geq n^t.$ By comparing the slopes, $\min\{|V_1|+2|V_2|\}$ can be attained if we make $|V_1|$ as small as possible. Therefore,
\begin{eqnarray}
\gamma_R(S(K_n,t)) &\geq& \min\{|V_1|+2|V_2|\}
\nonumber   \\
&=&\left\{
\renewcommand\arraystretch{1.6}
\begin{array}{ll}
1\cdot 0+2\cdot \frac{n^t+1}{n+1} = 2\left\lceil\frac{n^t}{n+1}\right\rceil & \text{if $t$ is odd}; \\
1\cdot 1+2\cdot \frac{n^t-1}{n+1} = 2\left\lceil\frac{n^t}{n+1}\right\rceil-1 & \text{if $t$ is even}.
\end{array}\right.
\label{eq_RoV1V2_LP}
\end{eqnarray}
where~\eqref{eq_RoV1V2_LP} is obtained by letting $(|V_1|,|V_2|)=(0,(n^t+1)/(n+1))$ if $t$ is odd, and $(|V_1|,|V_2|)=(1,(n^t-1)/(n+1))$ if $t$ is even.

\smallskip

On the other hand, although the upper bound has been shown in Theorem~\ref{thm_R_leq}, we derive a Roman dominating function from the set $D_{n,t}$ and reprove it. If $t$ is odd, let
\begin{equation}
f(\textbf{v})=
\left\{\begin{array}{ll}
2 & \text{if $\textbf{v}\in D_{n,t}$};   \\
0 & \text{if $\textbf{v}\not\in D_{n,t}$}
\end{array}
\right.
\nonumber
\end{equation}
which achieves a Roman domination since $D_{n,t}$ is a dominating set of $S(K_n,t).$ Also, $f$ sums to $\sum_{\textbf{v}}f(\textbf{v})=2|D_{n,t}|=2\lceil n^t/(n+1)\rceil.$ If $t$ is even, let
\begin{equation}
f(\textbf{v})=
\left\{\begin{array}{ll}
2 & \text{if $\textbf{v}\in D_{n,t}^*$};   \\
1 & \text{if $\textbf{v}=1^t$};     \\
0 & \text{if $\textbf{v}\not\in D_{n,t}$}
\end{array}
\right.
\nonumber
\end{equation}
which attains a Roman domination, since in the latter part of proof in Theorem~\ref{thm_Dnt_domin_set}, we mention that $1^t$ is not a neighbor of any vertex in $D_{n,t}$. In this case, we have $\sum_{\textbf{v}}f(\textbf{v})=2|D_{n,t}^*|+1=2\lceil n^t/(n+1)\rceil-1.$

\smallskip

Since the lower bound and upper bound meet in the above argument, the result follows.
\end{proof}

\bigskip

\subsection{Double Roman domination numbers}
In this subsection, we give the exact values of the double Roman domination numbers $\gamma_{dR}(S(K_n,t))$ for arbitrary $n,t$, which generalize the result~\cite[Theorem~3.2]{aa:20} stating that $\gamma_{dR}(S(K_n,t))=3n-1.$ The methods of finding double Roman domination numbers of $S(K_n,t)$ will be similar to those for Roman domination numbers.
\begin{theorem}
For every positive integers $n\geq 2$ and $t,$ the double Roman domination number of the Sierpi\'{n}ski graph $S(K_n,t)$ is
$$\gamma_{dR}(S(K_n,t))=\left\{
\renewcommand\arraystretch{1.6}
\begin{array}{ll}
3\left\lceil\frac{n^t}{n+1}\right\rceil & \text{if $t$ is odd}; \\
3\left\lceil\frac{n^t}{n+1}\right\rceil-1 & \text{if $t$ is even}.
\end{array}\right.$$
\end{theorem}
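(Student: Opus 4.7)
The plan mirrors the proof of Theorem~\ref{thm_RDominNum}: an explicit construction built from $D_{n,t}$ supplies the upper bound, while Proposition~\ref{prop_noV1} together with a linear-programming argument supplies the lower bound.

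\emph{Upper bound:} Take $f=3$ on $D_{n,t}$ and $0$ elsewhere for $t$ odd; take $f=3$ on $D_{n,t}^*$, $f(1^t)=2$, and $f=0$ elsewhere for $t$ even. The odd case is immediate from Theorem~\ref{thm_Dnt_domin_set}. For the even case the only subtlety is that $1^t$ has no neighbor in $D_{n,t}^*$, but each neighbor $1^{t-1}\alpha$ of $1^t$ (with $\alpha\neq 1$) is adjacent to $1^{t-2}\alpha 1\in D_{n,t}^*$ via $s=t-1$ in the Sierpi\'nski adjacency rule, so it has a value-$3$ neighbor. By Lemma~\ref{lem_cardDnt} these give DRDFs of weight $3\lceil n^t/(n+1)\rceil$ and $3\lceil n^t/(n+1)\rceil-1$ respectively.

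\emph{Lower bound:} By Proposition~\ref{prop_noV1} I may assume the optimal $f$ has $V_1=\emptyset$; write $V=V_0\cup V_2\cup V_3$ and split $V_0=V_0^{(3)}\cup V_0^{(2)}$ according to whether a value-$0$ vertex is dominated by a $V_3$-neighbor or by two $V_2$-neighbors. Since every vertex has degree at most $n$, double counting gives $|V_0^{(3)}|\le n|V_3|$ and $2|V_0^{(2)}|\le n|V_2|$, whence
\[
n^t\le (n+1)|V_3|+\tfrac{n+2}{2}|V_2|. \qquad (\star)
\]
I minimise $w=3|V_3|+2|V_2|$ by casing on $|V_2|$. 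For $|V_2|=0$, $(\star)$ yields $w\ge 3\lceil n^t/(n+1)\rceil$. For $|V_2|=1$ no vertex of $V_0$ can have two $V_2$-neighbors so $|V_0^{(2)}|=0$, sharpening $(\star)$ to $(n+1)|V_3|\ge n^t-1$; this gives $w\ge 3\lceil n^t/(n+1)\rceil+2$ for $t$ odd and $w\ge 3\lceil n^t/(n+1)\rceil-1$ for $t$ even, matching the construction.

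\emph{Main obstacle:} The delicate step is ruling out smaller $w$ when $|V_2|\ge 2$. The continuous relaxation of $(\star)$ only gives $w\ge 3n^t/(n+1)+\tfrac{(n-2)|V_2|}{2(n+1)}$, which after integer rounding can lose a unit for small $n$ or for even $t$. I would close this gap by sharpening $(\star)$ using that the $n$ extreme vertices $E=\{1^t,\ldots,n^t\}$ have degree only $n-1$, yielding
\[
(n+1)|V_3|+\tfrac{n+2}{2}|V_2|\ge n^t+|V_3\cap E|+\tfrac12|V_2\cap E|.
\]
If some extreme vertex lies in $V_2\cup V_3$, the extra slack directly supplies the missing unit; otherwise every extreme lies in $V_0$, and each one (having only $n-1$ neighbors) forces at least one specific non-extreme neighbor into $V_3$, injecting the required additional mass into the count and pinning $w$ to the claimed minimum.
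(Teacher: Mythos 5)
Your upper bound is correct and identical to the paper's: weight $3$ on $D_{n,t}$ for odd $t$, and weight $3$ on $D_{n,t}^*$ plus $2$ on $1^t$ for even $t$; your check that each neighbor $1^{t-1}\alpha$ of $1^t$ is also adjacent to $1^{t-2}\alpha 1\in D_{n,t}^*$ is exactly the point that makes the even case work. Your lower-bound setup (Proposition~\ref{prop_noV1} together with the counting inequality $(\star)$, which is the paper's \eqref{eq_dRoV1V2}) also matches the paper, and your cases $|V_2|=0$ and $|V_2|=1$ are handled correctly. You are moreover right --- and more careful than the paper here --- that $(\star)$ alone does not yield the claimed lower bound: for even $t$ the integer point $(|V_2|,|V_3|)=\bigl(2,\frac{n^t-1}{n+1}-1\bigr)$ satisfies $(\star)$ with equality and has weight $3\bigl\lceil\frac{n^t}{n+1}\bigr\rceil-2$, and for $n=2$ the objective $2|V_2|+3|V_3|$ is parallel to the constraint, so deficient feasible points exist for arbitrarily large $|V_2|$. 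The paper's ``slope'' argument passes over this silently.

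However, your proposed repair of the case $|V_2|\ge 2$ has a genuine gap. The assertion that an extreme vertex lying in $V_0$ ``forces at least one specific non-extreme neighbor into $V_3$'' is false for $n\ge 4$: such a vertex has $n-1\ge 3$ neighbors and may be guarded by two $V_2$-neighbors, contributing nothing to $|V_3|$. Even granting one unit of slack per extreme vertex, the total gain is at most $n$, which does not touch the $n=2$ situation just described, where the deficit of one unit must be excluded for every $|V_2|$ in an arithmetic progression (e.g.\ $|V_2|\equiv 2\pmod 3$ when $t$ is even). Closing the gap genuinely requires structural information about $S(K_n,t)$ beyond vertex degrees --- for instance, a bound on how many $V_0$-vertices a fixed pair of $V_2$-vertices can jointly guard (their common neighbors lie in a single $K_n$-cell), or an induction over the $n$ subcopies of $S(K_n,t-1)$. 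As written, the lower bound for $|V_2|\ge 2$ is asserted rather than proved, so the proposal does not yet establish the theorem.
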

\begin{proof}
By Proposition~\ref{prop_noV1}, we narrow our discussion by letting $f:V(K_n,t)\rightarrow\{0,2,3\}$ be a double Roman dominating function, where $V(K_n,t)$ is the set of vertices in $S(K_n,t).$ Suppose that $V_2$ and $V_3$ are the set of vertices in $S(K_n,t)$ that are valued with $2$ and $3$, respectively. Since a vertex valued with $0$ can be ``guarded'' by $2$ neighbors valued with $2$ in $f$, we may assume that a vertex valued with $2$ in $f$ guards at most $1+n/2$ vertices in a graph. Therefore, the equation \eqref{eq_RoV1V2} becomes
\begin{equation}
n^t = |V(S(K_n,t))| \leq \left(1+\frac{n}{2}\right)|V_2|+(n+1)|V_3|.
\label{eq_dRoV1V2}
\end{equation}
A new linear program appears as follows: finding $\min\{2|V_2|+3|V_3|\}$ provided that nonnegative integers $|V_2|$ and $|V_3|$ satisfying~\eqref{eq_dRoV1V2}. We can see that the absolute value of slope in~\eqref{eq_dRoV1V2} is $2-2/(n+2)\geq 3/2.$ Therefore, $\min\{2|V_3|+3|V_3|\}$ can be attained if $|V_2|$ is as small as possible. We have
\begin{eqnarray}
\gamma_{dR}(S(K_n,t)) &\geq& \min\{2|V_2|+3|V_3|\}
\nonumber   \\
&=&\left\{
\renewcommand\arraystretch{1.6}
\begin{array}{ll}
2\cdot 0+3\cdot \frac{n^t+1}{n+1} = 3\left\lceil\frac{n^t}{n+1}\right\rceil & \text{if $t$ is odd}; \\
2\cdot 1+3\cdot \frac{n^t-1}{n+1} = 3\left\lceil\frac{n^t}{n+1}\right\rceil-1 & \text{if $t$ is even}
\end{array}\right.
\label{eq_dRoV1V2_LP}
\end{eqnarray}
where~\eqref{eq_dRoV1V2_LP} is obtained by letting $(|V_2|,|V_3|)=(0,(n^t+1)/(n+1))$ if $t$ is odd, and $(|V_2|,|V_3|)=(1,(n^t-1)/(n+1))$ if $t$ is even.

\smallskip

On the other hand, we verify the upper bound by giving a double Roman dominating function. If $t$ is odd let
\begin{equation}
f(\textbf{v})=
\left\{\begin{array}{ll}
3 & \text{if $\textbf{v}\in D_{n,t}$};   \\
0 & \text{if $\textbf{v}\not\in D_{n,t}$}
\end{array}
\right.
\nonumber
\end{equation}
with $\sum_{\textbf{v}}f(\textbf{v})=3|D_{n,t}|=3\lceil n^t/(n+1)\rceil,$ and if $t$ is even let
\begin{equation}
f(\textbf{v})=
\left\{\begin{array}{ll}
3 & \text{if $\textbf{v}\in D_{n,t}^*$};   \\
2 & \text{if $\textbf{v}=1^t$};     \\
0 & \text{if $\textbf{v}\not\in D_{n,t}$}
\end{array}
\right.
\nonumber
\end{equation}
with $\sum_{\textbf{v}}f(\textbf{v})=3|D_{n,t}^*|+2=3\lceil n^t/(n+1)\rceil-1.$ Similar to the proof in Theorem~\ref{thm_RDominNum}, each of the above two cases reaches a double Roman domination in $S(K_n,t).$

\smallskip

Thus, the lower bound and upper bound meet. We have the proof.
\end{proof}

\bigskip

\section{Concluding remark}     \label{sec_conclu}
In this study, based on the properties of $D_{n,t}$ defined in Definition~\ref{defn_dnt}, we obtain the precise values of domination numbers $\gamma(S(K_n,t))$, Roman domination numbers $\gamma_R(S(K_n,t))$ and double Roman domination numbers $\gamma_{dR}(S(K_n,t))$ of Sierpi\'{n}ski graphs $S(K_n,t).$ As applications, we improve Theorem~\ref{thm_R_leq} given in~\cite{rrr:17} by showing that the equality hold for any pair of $n$ and $t$. Moreover, since $\gamma_R(S(K_n,2))$ and $\gamma_{dR}(S(K_n,2))$ have been obtained in~\cite{aa:20}, our work also extend the their results to arbitrary $t.$ To conclude this paper, one can refer to the following table.

\medskip

\noindent
\begin{center}
\renewcommand\arraystretch{1.5}
\begin{tabular}{c|c|c}
  \begin{tabular}{cc}
  Sierpi\'{n}ski graphs\\
  $S(K_n,t)$
  \end{tabular}
  & Numbers &
  \begin{tabular}{c}
  Dominating sets\\
  or functions
  \end{tabular} \\
  \hline
  Domination &  $\gamma(S(K_n,t))=\left\lceil\frac{n^t}{n+1}\right\rceil $ & $D_{n,t}$ \\
  \hline
  Roman domination &
  \begin{tabular}{l}
  $\gamma_R(S(K_n,t))$\\
  $=\left\{
    \begin{array}{ll}
    2\left\lceil\frac{n^t}{n+1}\right\rceil & \text{if $t$ is odd} \\
    2\left\lceil\frac{n^t}{n+1}\right\rceil-1 & \text{if $t$ is even}
    \end{array}\right.$
  \end{tabular}
  &
  \begin{tabular}{l}
  When $t$ is odd,\\
  $f(\textbf{v})=
    \left\{\begin{array}{ll}
    2 & \text{if $\textbf{v}\in D_{n,t}$}  \\
    0 & \text{if $\textbf{v}\not\in D_{n,t}$}
    \end{array}
    \right.$\\
  When $t$ is even,\\
  $f(\textbf{v})=
    \left\{\begin{array}{ll}
    2 & \text{if $\textbf{v}\in D_{n,t}^*$}   \\
    1 & \text{if $\textbf{v}=1^t$}    \\
    0 & \text{if $\textbf{v}\not\in D_{n,t}$}
    \end{array}
    \right.$
  \end{tabular}
  \\
  \hline
  \begin{tabular}{cc}
  Double Roman\\
  domination
  \end{tabular} &
  \begin{tabular}{l}
  $\gamma_{dR}(S(K_n,t))$\\
  $=\left\{
    \begin{array}{ll}
    3\left\lceil\frac{n^t}{n+1}\right\rceil & \text{if $t$ is odd} \\
    3\left\lceil\frac{n^t}{n+1}\right\rceil-1 & \text{if $t$ is even}
    \end{array}\right.$
  \end{tabular}
  &
  \begin{tabular}{l}
  When $t$ is odd,\\
  $f(\textbf{v})=
    \left\{\begin{array}{ll}
    3 & \text{if $\textbf{v}\in D_{n,t}$}   \\
    0 & \text{if $\textbf{v}\not\in D_{n,t}$}
    \end{array}
    \right.$\\
  When $t$ is even,\\
  $f(\textbf{v})=
    \left\{\begin{array}{ll}
    3 & \text{if $\textbf{v}\in D_{n,t}^*$}   \\
    2 & \text{if $\textbf{v}=1^t$}     \\
    0 & \text{if $\textbf{v}\not\in D_{n,t}$}
    \end{array}
    \right.$
  \end{tabular}
\end{tabular}

\bigskip

{\bf Table 1:} Domination in the Sierpi\'{n}ski graphs $S(K_n,t).$
\end{center}

\bigskip

\section*{Acknowledgments}
This research is supported by Xiamen University Malaysia Research Fund under the project XMUMRF/2020-C5/IMAT/0015.


\begin{thebibliography}{8}

\bibitem{ahsy:16}
H. A. Ahangar, M. A. Henning, V. Samodivkin, I. G. Yero, Total Roman Domination in Graphs,
{\it Applicable Analysis and Discrete Mathematics} 10 (2016) 501-517.

\bibitem{aa:18}
Anu V., Aparna Lakshmanan S., Double Roman Domination Number,
{\it Discrete Applied Mathematics} 244 (2018) 198-204.

\bibitem{aa:20}
Anu V., Aparna Lakshmanan S., The double Roman Domination number of generalized Sierpi\'{n}ski graphs, {\it Discrete Mathematics, Algorithms and Applications} (2020): 2050047.

\bibitem{bhh:16}
R. A. Beeler, T. W. Haynes, S. T. Hedetniemi, Double Roman domination,
{\it Discrete Applied Mathematics} 211 (2016) 23-29.

\bibitem{ckpw:09}
E. W. Chambers, B. Kinnersley, N. Prince, D. B. West, Extremal problems for Roman domination,
{\it SIAM Journal on Discrete Mathematics} 23 (2009) 1575-1586.

\bibitem{cdhh:04}
E. J. Cockayne, P. A. Dreyer, S. M. Hedetniemi, S. T. Hedetniemi, On Roman domination in graphs,
{\it Discrete Mathematics} 278 (2004) 11-22.

\bibitem{gj:79}
M. R. Gary, D. S. Johnson, {\it Computers and Intractability: A Guide to the Theory of NP-completeness.} W. H. Freeman \& Co. 1979.

\bibitem{gkmmp:13}
S. Gravier, M. Kov\v{s}e, M. Mollard, J. Moncel, A. Parreau, New results on variants of covering codes in Sierpi\'{n}ski graphs,
{\it Designs, Codes and Cryptography} 69 (2013) 181-188.

\bibitem{hh:14}
A. M. Hinz, C. Holz auf der Heide, An efficient algorithm to determine all shortest paths in Sierp\'{s}ki graphs,
{\it Discrete Applied Mathematics} 177 (2014) 111-120.

\bibitem{hp:12}
A. M. Hinz, D. Parisse, The average eccentricity of Sierpi\'{s}ki graphs,
{\it Graphs and Combinatorics} 28 (2012) 671-686.

\bibitem{km:97}
S. Klav\v{z}ar, U. Milutinovi\'{c}, Graphs $S(n,k)$ and a variant of the Tower of Hanoi problem,
{\it Czechoslovak Math. J.} 47 (1997) 95-104.

\bibitem{kmp:02}
S. Klav\v{z}ar, U. Milutinovi\'{c}, C. Petr, 1-Perfect Codes in Sierpi\'{n}ski Graphs,
{\it Bull. Austral. Math. Soc.} 66 (2002) 369-384.

\bibitem{kpz:13}
S. Klav\v{z}ar, I. Peterin, S. S. Zemlji\v{c}, Hamming dimension of a graph-The case of Sierpi\'{n}ski graphs,
{\it European Journal of Combinatorics} 34 (2013) 460-473.

\bibitem{rrr:17}
F. Ramezani, E. D. Rodr\'{i}guez-Bazan, J. A. Rodr\'{i}guez-Vel\'{a}zquez, On the Roman domination number of generalized Sierpi\'{n}ski graphs,
{\it Filomat} 31 (2017) 6515-6528.

\bibitem{rr:20}
C. S. Revelle, K. E. Rosing, Defendens imperium romanum: a classical problem in military strategy,
{\it The American Mathematical Monthly} 107 (2000) 585-594.

\bibitem{rre:17}
J. A. Rodr\'{i}guez-Vel\'{a}zquez, E. D. Rodr\'{i}guez-Bazan, A. Estrada-Moreno, On generalized Sierpi\'{n}ski graphs,
{\it Discussiones Mathematicae Graph Theory} 37 (2017) 547-560.

\bibitem{s:19}
I. Stewart, Defend the Roman empire,
{\it Scientific American} 281 (1999) 136-138.

\bibitem{v:18}
L. Volkmann, Double Roman Domination and Domatic Numbers of Graphs,
{\it Communications in Combinatorics and Optimization} 3 (2018) 71-77.


\end{thebibliography}
\end{document}